\newcommand{\Nat}{\mathbb N}
\newcommand{\R}{\mathbb R}
\newcommand{\pse}{\psi'({\eta})}
\newcommand{\Z}{\mathbb Z}
\newcommand{\Esp}{\mathbb E}
\newcommand{\p}{\mathbb P}
\newcommand{\EE}[1]{\Esp\left[#1\right]}
\newcommand{\1}[1]{\mathbf{1}\!_{\left\{#1\right\}}}
\newcommand{\ext}{\mathrm{Ext}}
\newcommand{\as}{\quad\mathrm{ a.s.}}
\newcommand{\eps}{\varepsilon}
\newcommand{\intpos}{\int_0^\infty}
\newcommand{\peto}{\mathbb P^\star}
\newcommand{\dif}{\mathrm{d}}
\newcommand{\linf}[1]{\underset{#1\to\infty}\longrightarrow}
\newtheorem{prop}{Proposition}[section]
\newtheorem{lem}[prop]{Lemma}
\newtheorem{thm}[prop]{Theorem}
\newtheorem{rem}[prop]{Remark}
\newtheorem{cor}[prop]{Corollary}
\newtheoremstyle{main}{\topsep}{\topsep}%
{\itshape}% Body font
{}% Indent amount (empty = no indent, \parindent = para indent)
{\normalfont}% Thm head font
{\textcolor{red}{:}}% Punctuation after thm head
{ }% Space after thm head (\newline = linebreak)
{\danger\textbf{\textcolor{red}{\thmname{#1}\thmnumber{~#2}\thmnote{ (\normalfont #3)}}}}% Thm head spec
\theoremstyle{main}
\author{\textsc{Mathieu Richard$^{1}$}}
\title{Splitting trees with neutral mutations at birth}
\begin{document}
\maketitle
%\tableofcontents
\footnotetext[1]{CMAP, Ecole Polytechnique, Route de Saclay, 91128 Palaiseau Cedex, France. \href{mailto:mathieu.richard@cmap.polytechnique.fr}{\nolinkurl{mathieu.richard@cmap.polytechnique.fr}}}

\begin{abstract}
We consider a population model where individuals behave independently from each other and whose genealogy is described by a chronological tree called splitting tree. The individuals have i.i.d. (non-exponential) lifetime durations and give birth at constant rate to clonal or mutant children in an infinitely many alleles model with neutral mutations.

First, to study the allelic partition of the population, we are interested in its frequency spectrum, which, at a fixed time, describes the number of alleles carried by a given number of individuals and with a given age. We compute the expected value of this spectrum and obtain some almost sure convergence results thanks to classical properties of Crump-Mode-Jagers (CMJ) processes counted by random characteristics.

Then, by using multitype CMJ-processes, we get asymptotic properties about the number of alleles that have undergone a fixed number of mutations with respect to the ancestral allele of the population.
\end{abstract}
\emph{Key words:} Splitting tree; Crump-Mode-Jagers branching processes; neutral mutation; infinitely many alleles model; frequency spectrum; multitype branching processes; almost-sure limit theorem; epidemiology.\\
\emph{MSC 2010:} Primary 60J80, secondary 60J28, 92D25, 60J85, 60J27, 92D10, 60G55.

\section{Introduction}
We consider a general branching model with neutral mutations occurring at birth. We suppose that individuals carry alleles, have i.i.d. (and not necessarily exponential) life lengths and give birth at constant rate $b$ during their lives to children who can be mutants with probability $p$ or clones of their parents with probability $1-p$. We are working with an infinite alleles model, that is, when a mutation occurs, the allele of the mutant child was never encountered before. Moreover, mutations are neutral because they do not imply advantages or disadvantages (all individuals have identically distributed dynamics). %We call a \emph{family} the set of all individuals sharing a same allele.

Without mutation, the model is linked to a genealogical tree called \emph{splitting tree} \cite{Geiger1996,Geiger1997,Amaury_contour_splitting_trees}. Moreover, if $\Xi(t)$ denotes the number of alive individuals at time $t$, the process $\Xi:=(\Xi(t),t\geq0)$ is a \emph{Crump-Mode-Jagers process} (or general branching process) \cite{Jagers_BP_with_bio} which is binary (births occur singly) and homogeneous (constant birth rate).
%We are interested in asymptotic results as $t\rightarrow\infty$, so we suppose that $\Xi$ is supercritical and we work conditionally on its survival event.
\\

We are first interested in the allelic partition of the population and more precisely in properties of the \emph{frequency spectrum} $(M_t^{i,a},i\geq1)$ where $M_t^{i,a}$ is the number of distinct alleles carried by exactly $i$ individuals at time $t$ and that appeared after time $t-a$ (or equivalently the alleles whose ages are less than $a$ at time $t$). Roughly speaking, this spectrum classifies the different alleles depending on their ages and on their sizes. 

This kind of question was first studied by Ewens \cite{Ewens1972} who discovered the well known 'sampling formula' named after him and which describes the law of the allelic partition for a Wright-Fisher model with neutral mutations. In our model, we cannot get a counterpart of the Ewens' sampling formula in the sense that we cannot compute the joint law of $(M_t^{i,a},i\geq1)$ for fixed $0<a\leq t$. However, we obtain two kinds of results concerning the frequency spectrum $(M_t^{i,a},i\geq1)$. We first compute the expected value of the number of alleles carried by $i$ individuals at time $t$ and with age $a$. When the process $\Xi$ is supercritical, we also obtain the asymptotic behaviors of the frequency spectrum and of the relative abundances of alleles as $t\to\infty$ on the survival event of $\Xi$.

Similar models to ours have been studied in the literature.
In \cite{Griffiths1988a}, Griffiths and Pakes study the case of a Bienaym\'e-Galton-Watson process where children can independently be mutants with a given probability: the authors obtained asymptotic results about the number of alleles and the frequency spectrum at generation $n$ as $n\rightarrow\infty$. In \cite{Pakes1989}, Pakes gets analogous properties concerning continuous-time Markov branching processes. In particular, his formula of the expected frequency spectrum can be seen as a counterpart of ours, stated in Section \ref{moyenne spectre allelique}. These two works \cite{Griffiths1988a,Pakes1989} have recently been used by Kimmel and coworkers in several articles.  In \cite{Kimmel2010}, the authors are interested in the evolution of parts of DNA called Alu sequences. They model the evolution of these sequences using the infinite-alleles branching process of \cite{Griffiths1988a} with a linear-fractional offspring distribution. %Their results suggest that Alu sequences do not evolve neutrally.
In \cite{Kimmel2013}, the authors consider the model of Pakes \cite{Pakes1989} and they especially obtain an explicit expression of the limiting mean frequency spectrum in the particular case of birth and death processes and they also study the variance frequency spectrum.

In the two articles \cite{Bertoin2009,Bertoin2010}, Bertoin considers an infinite alleles model with neutral mutations in a subcritical or critical Bienaym\'e-Galton-Watson process where individuals independently give birth to a random number of clonal and mutant children according to the same joint distribution. In \cite{Bertoin2009}, he defines a tree of alleles where all individuals of a common allele are gathered in clusters and specifies the law of the allelic partition of the total population by describing the joint law of the sizes of the clusters and the numbers of their mutant children. %A tree of alleles is also built by Ta\"ib in \cite{Taib1992} for CMJ-processes.
In \cite{Bertoin2010}, Bertoin obtains the convergence of the sizes of allelic families for a large initial population and a small mutation rate.

Recent results have been obtained about splitting trees with mutations. In \cite{Cecile2013_II}, Delaporte studies sequences of splitting trees with general and neutral mutations occurring at birth and investigates scaling limits in a regime of large population size and rare mutations.
In two related papers \cite{Champagnat2010,Champ_Lamb_2}, Champagnat and Lambert consider a model really close to ours: the authors also work with splitting trees but in their model, individuals independently experience mutations at Poisson times during their lives. In \cite{Champagnat2010}, explicit formulas about the expected frequency spectrum at time $t$ are stated. In \cite{Champ_Lamb_2}, the authors are interested in large and old families; they obtain asymptotic results about the sizes of the largest families and about the ages of the oldest ones as $t\rightarrow\infty$. Their model with Poissonian mutations and our model with mutations 
occurring at birth are compared in \cite{Champ_Lamb_Rich} in the particular case of exponential lifelengths. 

 Finally, Ta\"ib \cite{Taib1992} considers CMJ-processes with a more general kind of mutations at birth (for example, the probability that a mutation occurs can depend on the age of the mother)
 and thanks to the theory of CMJ-processes counted with characteristics, he obtains several asymptotic results, especially about the frequency spectrum of the population.
However, in our particular case, some of the non-explicit limits he obtained can be computed thanks to a recent paper of Lambert \cite{Amaury_contour_splitting_trees} giving for example the one-dimensional marginals of $\Xi$ and its asymptotic behavior.
\\

We then obtain properties about the number of mutations undergone by alleles. More specifically, for $i\geq0$ and $t\geq0$, we study $L_i(t)$ the number of alleles at time $t$ that have been affected by $i$ more mutations than the ancestral allele and $K_i(t)$ the number of individuals that carry such alleles.
Our model can represent the spread of an epidemic: individuals are infected hosts, deaths are recoveries or actual deaths and births are transmissions of the disease, which can mutate to new strains. Then, $L^i(t)$ is simply the number of strains of the disease that are present at $t$ and that are at a distance $i$ from the original strain of the disease. Moreover, the process $K_i$ describes the number of individuals infected by such strains of the disease.

We compute the expected values of $L_i(t)$ and $K_i(t)$ and obtain asymptotic results about $K_i(t)$  
as $t\to\infty$
by considering a multitype CMJ-process where the type of an allele is the number of mutations it has undergone. Multitype branching processes are also used in carcinogenesis, that is, in the evolution of cancerous cells. In \cite{Durrett_Ovarian,Durrett201042}, cancerous cells are modeled by a multitype branching process where a cell is of type $k$ if it has undergone $k$ mutations and where the more a cell has undergone mutations, the faster it grows. The object of study is the time $\tau_k$ of appearance of the first cell of type $k$.
Branching processes and birth and death processes appear in other works dealing with the evolution of cancerous cells (see for example \cite{Nowak09122003,IwasaApril2006,Sagitov2009,Sagitov2013} and references therein).
For instance, in \cite{Sagitov2009}, the authors study the arrival time of the first resistant cell  in a model of cancerous cells undergoing a medical treatment and becoming resistant after having experienced a certain number of mutations.
\\

The paper is organized as follows: in Section \ref{preliminaires}, we expose the model that we consider and give some of its properties that will be useful to get our main results. Section \ref{section_spectre_frequence} is devoted to the study of the frequency spectrum and in Section \ref{nombre_de_mutations_subies}, we are interested in properties of the number of mutations undergone by alleles. 

\section{Preliminaries}\label{preliminaires}
\subsection{The model}
In this paper, as a population model, we consider genealogical trees satisfying the branching property and called \emph{splitting trees} \cite{Geiger1996,Geiger1997,Amaury_contour_splitting_trees}. They are random trees satisfying the following assumptions.
\begin{itemize}
\item At time $t=0$, there is only one progenitor.
\item All individuals have i.i.d. lifespans and reproduction behaviors.
\item Conditional on her birth date $\alpha$ and her lifespan $\zeta$, each individual gives birth to children at constant rate $b\in(0,\infty)$ during $(\alpha,\alpha+\zeta]$.
\item Births occur singly.
\end{itemize}
We denote the common lifespan distribution by $\Lambda(\cdot)/b$ where $\Lambda$ is a finite positive measure on $(0,+\infty]$ with total mass $b$ and called \emph{lifespan measure} \cite{Amaury_contour_splitting_trees}.

To this splitting tree, we add neutral mutations occurring at birth in the following way. We fix $p\in(0,1)$. When a birth event occurs, independently of others individuals, with probability $1-p$, a child is 
a clone of her mother, that is carries the same allele and a mutant with probability $p$. Moreover, when a mutant appears in the population, its allele was never carried before by any other individual. 
Thus, we consider an \emph{infinitely many alleles} model with \emph{neutral mutations} because all individuals behave in the same way regardless of allele.
On Figure \ref{figure_spectre des frequences}, one can find an illustration of this model.\\

\begin{figure}[!ht]
\begin{center}
\includegraphics{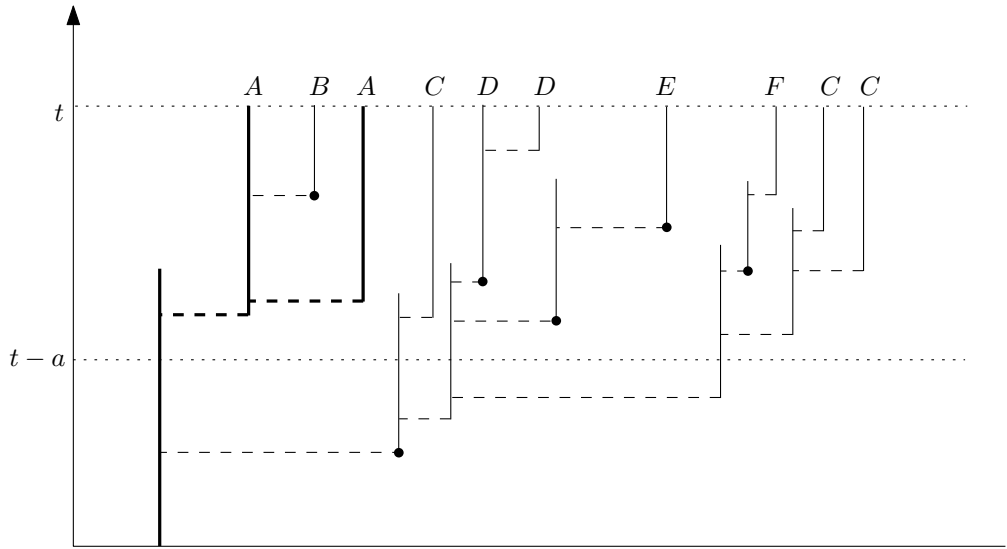}
\caption{An example of a splitting tree with mutations up to time $t$. The vertical axis represents time and the horizontal axis shows filiation (lengths of dashed lined are meaningless). Full circles represent mutations occurring at birth and thick lines, the clonal splitting tree of the ancestor up to time $t$. The different letters are the alleles of live individuals at time $t$.}
\label{figure_spectre des frequences}
\end{center}
\end{figure}

Without mutation, if $\Xi(t)$ is the number of extant individuals at time $t$, then the process $(\Xi(t),t\geq0)$ is a Crump-Mode-Jagers (CMJ) process or general branching process (see \cite{Jagers_BP_with_bio,Jagers1984a} and references therein). We use the formalism commonly employed in CMJ-processes. Denote by $I=\bigcup_{n=0}^{\infty}(\Nat^*)^n$ the Ulam-Harris family history space
with the convention $\Nat^0=\{0\}$.
The interpretation is that an individual $\mathbf{i}=(i_1,\dots,i_{n-1},i_n)\in I$ is the $i_n$-th child of the $i_{n-1}$-th child of ... of the $i_1$-th child of the ancestor, who is labeled $\{0\}$. For more details, the interested reader can see \cite{Jagers_BP_with_bio}. With each individual $\mathbf{i}\in I$, one associates a non-negative random variable $\lambda_\mathbf{i}$ (its life length), and a point process $\xi_\mathbf{i}$ called birth point process. The sequence $(\lambda_\mathbf{i},\xi_\mathbf{i})_{\mathbf{i}\in I}$ is assumed to be i.i.d. but $\lambda_\mathbf{i}$ and $\xi_\mathbf{i}$ are not necessarily independent.

In our particular case, the common distribution of lifespans is $\Lambda(\cdot)/b$ and conditional on its lifespan, the birth point process of an individual is distributed as a Poisson point process during its life. If we denote by $\xi$ the birth point process of the ancestor, its characteristic measure is then given by
\begin{equation}\label{defi_mu}
\mu(\dif t):=\Esp[\xi(\dif t)]=\dif t\Lambda\big((t,+\infty]\big).
\end{equation}
In the same way, if $\xi_m$ (resp $\xi_c$) is the birth point process of the mutant (resp. clonal) children of the ancestor, then by the thinning property of Poisson point processes, $\xi_m$ and $\xi_c$ are independent, $\xi=\xi_m+\xi_c$ and with obvious notation, $\mu_m(\dif t)=p\mu(\dif t)$ and $\mu_c(\dif t)=(1-p)\mu(\dif t)$.

\subsection{Basic properties about the process \texorpdfstring{$\Xi$}{Xi}}\label{rappels}
We consider here the model without mutation and recall some known facts about the CMJ-processes $\Xi$, which will be useful in the following.

We say that the process $\Xi$ is subcritical, critical or supercritical if $$m:=\int_{(0,\infty]}r\Lambda(\dif r)$$ is respectively less than, equal to or greater than $1$.
We set
\begin{equation}\label{def_psi}
\psi(\lambda):=\lambda-\int_{(0,\infty]}\left(1-e^{-\lambda
r}\right)\Lambda(\dif r).
\end{equation}
 Since this function is convex and satisfies $\psi(0^+)=0$, we can define $\eta$ as its largest root. Moreover, since $\psi'(0^+)=1-m$, when the process $\Xi$ is subcritical or critical, we have $\eta=0$ and when it is supercritical, $\eta$ is positive. In the last case, $\eta$ is called the \emph{Malthusian parameter} of the population as explained in the forthcoming Proposition \ref{convergence_Amaury}.
 
To obtain properties about the splitting trees, Lambert defined in \cite{Amaury_contour_splitting_trees} a contour process which characterized them. This contour process is a spectrally positive L\'evy process with Laplace exponent $\psi$.  
Let $W$ be the \emph{scale function} associated with it (see \cite[Ch. VII]{Levy_processes}), defined as the unique increasing continuous function $(0,\infty)\rightarrow(0,\infty)$ satisfying
\begin{equation}\label{defi_W}
  \intpos W(x)e^{-\lambda x}\dif x=\frac{1}{\psi(\lambda)}, \quad \lambda>\eta.
\end{equation}
In the entire paper, by convenience, we assume that the following hypothesis holds.
\begin{equation}\label{noatom}
\textrm{The measure }\Lambda\textrm{ has no atom.}\tag{H}
\end{equation}
According to \cite[p. 234]{Kyprianou2006}, under the hypothesis \eqref{noatom}, the function $W$ is continuously differentiable and thanks to the Lemma 4.1 in \cite{Splitting_trees_Immig}, we have
\begin{equation}\label{formule_derivee_W}
 W'(t)=bW(t)-\int_0^tW(t-x)\Lambda(\dif x),\quad t\geq0.
\end{equation}
%where $\star$ denotes the standard convolution product.
%
In fact, if \eqref{noatom} is not satisfied, most of the results stated in this paper still hold by replacing $W'(t)$ by $bW(t)-\int_0^tW(t-x)\Lambda(\dif x)$. We mainly choose to assume \eqref{noatom} in order to simplify the results and their proofs. 

Thanks to the scale function $W$, we deduce the one-dimensional marginals of $\Xi$ (see \cite[p. 293]{Splitting_trees_Immig})
\begin{equation}\label{loi_Xt}
  \p(\Xi(t)=0)=1-\frac{W'(t)}{bW(t)}
\end{equation}
and for $n\in\Nat^*$,
\begin{equation}\label{loi_Xt_2}
  \p(\Xi(t)=n)=\left(1-\frac{1}{W(t)}\right)^{n-1}\frac{W'(t)}{bW(t)^2}.
\end{equation}
In other words, conditional on being positive, $\Xi(t)$ is geometric with success probability $1/W(t)$.
In particular, for $t\geq0$,
\begin{equation}\label{esp_Xt}
\EE{\Xi(t)}=\frac{W'(t)}{b}.
\end{equation}
%
% and if $T$ denotes the time of extinction of the tree
% $$T:=\inf\{t\geq0;\Xi(t)=0\},$$
% we have for $t\geq0$
% \begin{equation}\label{loi_T}
%   \p(T>t)=\p(\Xi(t)\neq0)=\frac{W'(t)}{bW(t)}.
% \end{equation}
%
Let $\ext:=\left\{\Xi(t)\underset{t\rightarrow\infty}{\longrightarrow} 0\right\}$ be the extinction event of the splitting tree. We denote by $\peto:=\p(\cdot|\ext^c)$ the law of the process $\Xi$ conditional on its survival event.

\begin{prop}[Lambert \cite{Amaury_contour_splitting_trees}]
  \label{convergence_Amaury}
  We have $$\p(\ext)=1-\eta/b.$$
  Moreover, if $\Xi$ is supercritical ($m>1$), under $\peto$,
  \begin{equation}\label{epone3}e^{-\eta t}\Xi(t)\underset{t\rightarrow\infty}\longrightarrow \mathcal E\as
  \end{equation}
  where $\mathcal E$ is exponential with parameter $\pse$.
  \end{prop}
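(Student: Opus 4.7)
The plan is to read off both statements from the explicit one-dimensional marginals \eqref{loi_Xt}--\eqref{loi_Xt_2} combined with the asymptotic behavior of the scale function $W$ at infinity, and then to upgrade the resulting in-law limit to an almost sure statement via the theory of CMJ processes counted with random characteristics.

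\emph{Step 1 (extinction probability).} The first task is the asymptotics of $W$. Convexity of $\psi$ together with $\psi(\eta)=0$ and $\pse>0$ (since $\eta$ is the largest root of $\psi$) gives $1/\psi(\lambda)\sim 1/\bigl(\pse(\lambda-\eta)\bigr)$ as $\lambda\downarrow\eta$, and a Tauberian argument applied to \eqref{defi_W} yields $W(t)\sim e^{\eta t}/\pse$ in the supercritical case. Using \eqref{formule_derivee_W} I then deduce $W'(t)\sim\eta e^{\eta t}/\pse$, so that $W'(t)/(bW(t))\to\eta/b$. Plugging into \eqref{loi_Xt} and using the monotone exhaustion $\{\Xi(t)=0\}\uparrow\ext$ gives $\p(\ext)=1-\eta/b$. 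The subcritical and critical cases follow from the analogous expansion at $\lambda=0$: $W$ then grows at most linearly, $W'(t)/W(t)\to 0$ and one recovers $\p(\ext)=1=1-\eta/b$ since $\eta=0$.

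\emph{Step 2 (convergence in distribution).} Suppose now $m>1$. By \eqref{loi_Xt_2}, conditional on $\Xi(t)>0$ the variable $\Xi(t)$ is geometric with success probability $1/W(t)$. Since $1/W(t)\sim\pse\, e^{-\eta t}\to 0$, the standard geometric-to-exponential limit gives that, under $\peto$, the rescaled variable $e^{-\eta t}\Xi(t)$ converges in distribution to an exponential random variable with parameter $\pse$ (using also $\p(\Xi(t)>0)\to\eta/b=\peto(\ext^c)$ together with $\{\Xi(t)>0\}\downarrow\ext^c$ up to null sets).

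\emph{Step 3 (almost sure upgrade).} The delicate point is to promote this to an almost sure limit. For this I would view $\Xi$ as the general branching process counted by the bounded characteristic $\chi(a):=\1{0\leq a\leq\lambda_\emptyset}$. The Malthusian equation $\int_0^\infty e^{-\eta s}\mu(\dif s)=1$ follows from $\psi(\eta)=0$ and \eqref{defi_mu} after integration by parts, and a similar computation yields $\int_0^\infty s\,e^{-\eta s}\mu(\dif s)=\pse/\eta\in(0,\infty)$. Nerman's strong law of large numbers for CMJ processes counted with characteristics then applies and produces the almost sure convergence, on $\ext^c$, of $e^{-\eta t}\Xi(t)$ to a strictly positive multiple of the a.s. limit $W_\infty$ of the intrinsic Malthusian martingale. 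Since Step~2 identifies the law of the limit as $\mathrm{Exp}(\pse)$, that must be the distribution of the almost sure limit. The main obstacle is really this last step: the elementary geometric formula \eqref{loi_Xt_2} alone gives only convergence in distribution, and removing the conditioning requires the spine/martingale machinery for CMJ processes, which is the non-trivial content of Lambert's result.
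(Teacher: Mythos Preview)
Your sketch is correct and follows exactly the route the paper indicates. Note that the paper does not actually prove this proposition: it is stated as a result of Lambert, and the sentence following it explains that Lambert obtained the convergence in distribution while the almost sure upgrade comes from Nerman's theorem for supercritical CMJ processes. Your Steps~1--2 recover Lambert's argument from the explicit marginals \eqref{loi_Xt}--\eqref{loi_Xt_2} together with the Tauberian asymptotics for $W$ (the paper carries out precisely this Tauberian computation in the appendix proof of Lemma~\ref{W_et_W'}(i)), and your Step~3 is the appeal to Nerman that the paper cites. The verification of the Malthusian equation $\int_0^\infty e^{-\eta s}\mu(\dif s)=1$ and of the finiteness of $\beta=\int_0^\infty s e^{-\eta s}\mu(\dif s)$ that you outline is also done explicitly in the paper, in the proof of Proposition~\ref{nombre_mutants}. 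So there is nothing to correct: you have simply written out what the paper leaves to references.
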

In fact, Lambert proved the convergence in distribution in \cite{Amaury_contour_splitting_trees} and a.s. convergence holds according to \cite{Nerman_supercritical_CMJ} (see \cite[p.285]{Splitting_trees_Immig}). The convergence \eqref{epone3} justifies why we call $\eta$ the Malthusian parameter of the population since $\Xi(t)$ grows like $e^{\eta t}$ on the non-extinction event.\\

Most of the results stated in Sections \ref{section_spectre_frequence} and \ref{nombre_de_mutations_subies} deal with long-time behaviors of several processes. To obtain them, we need the asymptotic behaviors of the scale function $W$ and of its derivative $W'$.
Different regimes appear depending on whether $\Xi$ is subcritical, critical or supercritical.
We record them in the following lemma. %When $\Xi$ is subcritical, we assume that $\Lambda$ has exponential moments and in the critical case that the its second moment is finite.

\begin{lem}
\begin{enumerate}[{\normalfont (i)}]
\item If $m>1$ (supercritical case), as $t\to\infty$, we have
$$W(t)\sim \frac{1}{\pse}e^{\eta t}\textrm{ and  }W'(t)\sim \frac{\eta}{\pse}e^{\eta t}.$$
\item If $m=1$ (critical case) and $\sigma^2:=\intpos r^2\Lambda(\dif r)<\infty$,
we have $W(t)\underset{t\to\infty}\sim \frac{2}{\sigma^2}t.$
If we also suppose that 
\begin{equation}\label{cond_W_critique}
\lim_{t\to\infty}t^2\int_t^\infty(x-t)\Lambda(\dif x)=0,
 \end{equation}
we have $\displaystyle\lim_{t\to\infty}W'(t)=\frac{2}{\sigma^2}.$ 
\item If $m<1$ (subcritical case),  $\displaystyle\lim_{t\to\infty}W(t)=\frac{1}{1-m}.$
If we also assume that there is a negative $\tilde\eta$ satisfying
\begin{equation}\label{condJagers3}
\psi(\tilde\eta)=0\textrm{ and }\intpos re^{-\tilde\eta r}\Lambda(\dif r)<\infty,
\end{equation}
then
$\displaystyle W'(t)\underset{t\to\infty}\sim\frac{\tilde\eta}{\psi'(\tilde\eta)}e^{\tilde\eta t}.$
\end{enumerate}
\label{W_et_W'}
\end{lem}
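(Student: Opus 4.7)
My plan is to unify the three cases via renewal theory combined with exponential tilting. The key starting observation is that
\[
\psi(\lambda) \;=\; \lambda - b + \hat\Lambda(\lambda) \;=\; \lambda\bigl(1 - \hat\nu(\lambda)\bigr),
\]
where $\nu(du):=\Lambda\bigl((u,\infty]\bigr)\,du$ is a measure with total mass $\int u\,\Lambda(du)=m$ and bounded non-increasing density $f(u)=\Lambda((u,\infty])$. Combined with \eqref{defi_W}, this rewrites $\lambda\hat W(\lambda)$ as $\sum_{n\geq 0}\hat\nu(\lambda)^n$. Since $W(0^+)=1$ by the initial-value theorem (as $\psi(\lambda)/\lambda\to 1$), Laplace inversion identifies $W'(t)=\sum_{n\geq 1}f^{*n}(t)$ as the renewal density of $\nu$. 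In particular $W'\geq 0$, so $W$ is non-decreasing, a fact I reuse in the Tauberian and final-value arguments below.

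I would tackle cases (i) and (iii) simultaneously by exponential tilting. For any $\theta\in\mathbb R$ with $\psi(\theta)=0$, one has $\hat\nu(\theta)=1$, so $\nu_\theta(du):=e^{-\theta u}\nu(du)$ is a probability measure; differentiating the identity $\lambda(1-\hat\nu(\lambda))=\psi(\lambda)$ at $\theta$ gives its mean $\psi'(\theta)/\theta>0$, finite by default in case (i) and by hypothesis \eqref{condJagers3} in case (iii). The identity $e^{-\theta t}W'(t)=\sum_{n\geq 1}$(density of $\nu_\theta^{*n}$) shows that $e^{-\theta t}W'(t)$ is the renewal density of $\nu_\theta$, whose density $e^{-\theta u}f(u)$ is non-increasing hence directly Riemann integrable. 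Smith's density renewal theorem then yields
\[
e^{-\theta t}W'(t) \;\longrightarrow\; \frac{1}{\mathrm{mean}(\nu_\theta)}=\frac{\theta}{\psi'(\theta)},
\]
which is the claimed $W'$-asymptotic for $\theta=\eta$ (case (i)) and $\theta=\tilde\eta$ (case (iii)). For $W$ in case (i), I integrate to get $W(t)\sim e^{\eta t}/\psi'(\eta)$; for $W$ in case (iii), monotonicity of $W$ plus the final-value theorem $\lim_t W(t)=\lim_{\lambda\downarrow 0}\lambda/\psi(\lambda)=1/\psi'(0)$ gives $W(t)\to 1/(1-m)$.

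In case (ii), the tilting is trivial: $\nu$ itself is a probability measure with mean $\sigma^2/2$. The asymptotic $W(t)\sim 2t/\sigma^2$ I would obtain from Karamata's Tauberian theorem applied to the monotone $W$ using $\hat W(\lambda)\sim 2/(\sigma^2\lambda^2)$ as $\lambda\downarrow 0$. The pointwise limit $W'(t)\to 2/\sigma^2$ is again Smith's theorem, but the elementary route --- likely the one used in the paper --- starts from
\[
W'(t) \;=\; W(t)\,\Lambda\bigl((t,\infty]\bigr)+\int_0^t\bigl[W(t)-W(t-x)\bigr]\,\Lambda(dx),
\]
in which the first term tends to $0$ (since $\sigma^2<\infty$ forces $t^2\Lambda((t,\infty])\to 0$), and passing the second term to its expected limit $(2/\sigma^2)\int x\,\Lambda(dx)=2/\sigma^2$ requires the tail hypothesis \eqref{cond_W_critique} to secure the required uniform control on $W(t)-W(t-x)$. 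This last step, converting averaged asymptotics for $W$ into pointwise asymptotics for $W'$, will be the most delicate part of the argument.
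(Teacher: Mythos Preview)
Your renewal-theoretic approach is correct and genuinely different from the paper's. The paper obtains the asymptotics of $W$ via Tauberian theorems in all three cases (as you do for (ii) and (iii)); for $W'$, however, it proceeds case by case and never through renewal theory. In case~(i) it uses the identity $W'(t)=bW(t)-\int_0^t W(t-x)\,\Lambda(\dif x)$ together with the monotonicity of $e^{-\eta t}W(t)$ and monotone convergence. In cases~(ii) and~(iii) it does not analyse $W'$ directly at all: it invokes probabilistic results on the extinction probability $\p(\Xi(t)>0)=W'(t)/(bW(t))$ of the CMJ process --- Holte's theorem for critical branching processes in~(ii), and Jagers' Theorem~6.7.10 in~(iii) --- and then backs out $W'$. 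Your tilting-plus-Smith argument is considerably more unified, and in case~(ii) it actually yields $W'(t)\to 2/\sigma^2$ \emph{without} the extra tail hypothesis \eqref{cond_W_critique}; the paper needs that hypothesis only because Holte's theorem requires it. Your guess about an ``elementary route'' via $W'(t)=W(t)\Lambda((t,\infty])+\int_0^t(W(t)-W(t-x))\,\Lambda(\dif x)$ is therefore not what the paper does.

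One slip to fix: in case~(iii) the tilted density $e^{-\tilde\eta u}\Lambda((u,\infty])$ is \emph{not} non-increasing, since $\tilde\eta<0$ makes $e^{-\tilde\eta u}$ increasing. Direct Riemann integrability still holds, but needs a separate justification: using only the monotonicity of $u\mapsto\Lambda((u,\infty])$ one has $\sup_{[n,n+1]}e^{-\tilde\eta u}\Lambda((u,\infty])\leq e^{-\tilde\eta(n+1)}\Lambda((n,\infty])$, and this sequence is summable because $\int_0^\infty e^{-\tilde\eta u}\Lambda((u,\infty])\,\dif u=1$ forces $\sum_n e^{-\tilde\eta n}\Lambda((n+1,\infty])\leq 1$.
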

In the critical case, the condition \eqref{cond_W_critique} holds if $\Lambda$ has a finite third moment. Concerning the subcritical case, it is possible to find a negative root of $\psi$ because this function is convex, $\psi(0)=0$ and $\psi'(0)=1-m>0$. However, to have this root requires  
that the exponential moment $\intpos e^{\lambda r}\Lambda(\dif r)$ is finite for $\lambda$ large enough. %A simple example where condition \eqref{condJagers3} holds is the case of a Markovian birth and death process with birth rate $b$ and death rate $d$.
The proof of Lemma \ref{W_et_W'} is postponed to the appendix. 
\subsection{The clonal process}
In the sequel, an important role will be played by the clonal process $(\Xi_c(t),t\geq0)$ (c for clonal) where $\Xi_c(t)$ is the number of extant individuals at time $t$, carrying the same allele as the ancestor (see Figure \ref{figure_spectre des frequences}). Since mutations occur independently of $(\Xi(t),t\geq0)$, by the thinning property of Poisson processes, the process $\Xi_c$ defines a splitting tree with lifespan measure $$\Lambda_c:=(1-p)\Lambda.$$ Moreover, conditional on its arrival time in the population, each allele evolves like $\Xi_c$ and independently of other alleles.
% 
% If we compare the two processes $\Xi$ and $\Xi_c$, mutations only affect the births by decreasing the birth rate. On the contrary, in \cite{Champagnat2010,Champ_Lamb_2}, the mutations occur at a Poissonian rate during the life of individuals and in their clonal process, the mutations can be seen as an additional death rate.

According to the previous section, the process $\Xi_c$ is subcritical, critical or supercritical if $(1-p)m$ is respectively less than, equal to, or greater than $1$. In particular, when $\Xi$ is critical or subcritical, $\Xi_c$ is obviously subcritical.

 Furthermore, we associate with $\Xi_c$ the function $\psi_c$, which satisfies
\begin{equation}\label{def_psi_c}
\psi_c(\lambda):=\lambda-(1-p)\int_{(0,\infty]}\left(1-e^{-\lambda
r}\right)\Lambda(\dif r)=p\lambda+(1-p)\psi(\lambda).
\end{equation} 
Let $\eta_c$ be the largest root of $\psi_c$. When $\Xi_c$ is subcritical or critical, $\eta_c=0$ while in the supercritical case, $\eta_c>0$. Moreover, in the latter, by the definition of $\eta_c$ and by using \eqref{def_psi_c}, we have
$$\psi(\eta_c)=\frac{p\eta_c}{p-1}<0.$$ It implies that $\eta_c<\eta$ since $\psi$ is convex and its largest root is $\eta$.\\

Finally, all the properties about $\Xi$, stated in the paragraph \ref{rappels}, also hold for $\Xi_c$. To obtain them, it suffices to respectively replace $b$, $\psi$, $\eta$ and $W$ by $b(1-p)$, $\psi_c$, $\eta_c$ and $W_c$ where $W_c$ is the scale function associated with $\psi_c$, solution of
\begin{equation*}\label{defi_W_c}
\intpos W_c(x)e^{-\lambda x}\dif x=\frac{1}{\psi_c(\lambda)},\quad \lambda>\eta_c.
\end{equation*}
% \begin{rem}
%  One can find a closed-form formula between $W$ and $W_c$. Indeed, since $$\frac{\lambda}{\psi(\lambda)}-1=\frac{1}{1-p}\left(\frac{\lambda}{\psi(\lambda)}-1\right)\left(p\left(\frac{\lambda}{\psi(\lambda)}-1\right)+1\right),$$
%  $$W'=W'_c+pW'\star W'_c.$$
% It implies that
%  $$W=W_c+pW'\star W_c$$
%  since $W(0)=W_c(0)=1$.
%  %
% However, this formula is not useful.
%  \end{rem}

\section{Frequency spectrum}\label{section_spectre_frequence}
At a given time, for any allele, we call \emph{family} the set of all individuals that share this allele.
 To study the allelic partition of the population, we study its associated frequency spectrum, which, roughly speaking, sort the different families according to their sizes and their ages.
 More precisely, for $i\in \Nat^*$ and $a>0$, let $M_t^{i,a}$ be the number of alleles whose ages are less than $a$ and carried by $i$ individuals at $t$. Then, for fixed $0\leq a<t$, the sequence $(M_t^{i,a},i\geq1)$ is the frequency spectrum at time $t$ of families with ages less than $a$.
  
  Notice that $M_t^{i,t}$ is simply the number of alleles carried by $i$ particles at time $t$ (regardless of their ages) and that $\displaystyle M_t:=\sum_{i\geq1}M_t^{i,t}$ is the number of different alleles at time $t$.

In the example on Figure \ref{figure_spectre des frequences}, taking no account of ages of alleles, the frequency spectrum $\left(M_t^{i,t},i\geq1\right)$ is $(3,2,1,0,\dots)$ because three alleles ($B,E,F$) are carried by one individual, $A$ and $D$ are carried by two individuals and $C$ is the only allele carried by three individuals. If we are only interested in alleles younger than $a$ at time $t$, we have $\left(M_t^{i,a},i\geq1\right)=(3,1,0,\dots)$ since alleles $A$ and $D$ appeared in the population before $t-a$.\\

Although it is not possible to obtain the joint distribution of $(M_t^{i,a},i\geq1)$ for fixed $t$ and $a$ as in the 'Ewens sampling formula', we are able to get some properties of this frequency spectrum.

\subsection{Expected frequency spectrum}\label{moyenne spectre allelique}

We first give an exact expression of the expected frequency spectrum at any time $t$.
For $0<a<t$ and $i\geq1$, we denote by $M_t^{i,\dif a}$ the number of alleles carried by $i$ individuals at time $t$ and with ages in $[a-\dif a,a]$. The following proposition yields its expected value.

\begin{prop}\label{wxcv}
For $0<a<t$ and $i\geq1$, we have
\begin{equation}\label{expect_spectrum}\EE{M_t^{i,\dif a}}=\frac{p}{b(1-p)}W'(t-a)\left(1-\frac{1}{W_c(a)}\right)^{i-1}\frac{W_c'(a)}{W_c^2(a)}\ \dif a.\end{equation}
\end{prop}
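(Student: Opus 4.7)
My plan is to decompose $M_t^{i,\dif a}$ as a count of mutation events that occur at time $t-a$ and whose associated clonal subtree happens to have size exactly $i$ at time $t$, and then apply Campbell's formula combined with the branching property.

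\textbf{Step 1 (the mutation point process).} Births in the splitting tree occur, along each alive individual, according to an independent Poisson process of rate $b$, and each birth is independently marked ``mutant'' with probability $p$. Aggregating over all alive lines, the point process $N$ of mutation times in the whole tree has intensity $bp\,\Xi(s)$ at time $s$, so by \eqref{esp_Xt},
\begin{equation*}
\EE{N(\dif s)}=bp\,\EE{\Xi(s)}\,\dif s = p\,W'(s)\,\dif s.
\end{equation*}

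\textbf{Step 2 (decomposition and branching property).} Enumerate the mutation events by their occurrence times $(\tau_k)$, and for each $k$ let $\tilde\Xi_k$ denote the clonal subtree descending from the mutant child born at $\tau_k$ (i.e.\ the mutant, her clonal descendants, her clonal descendants' clonal descendants, and so on, up to the next mutation on each line). An allele of age in $[a-\dif a,a]$ carried by $i$ individuals at time $t$ corresponds exactly to one such $\tau_k\in[t-a,t-a+\dif a]$ satisfying $\tilde\Xi_k(t-\tau_k)=i$, so
\begin{equation*}
M_t^{i,\dif a}=\sum_k \1{\tau_k\in[t-a,t-a+\dif a],\ \tilde\Xi_k(t-\tau_k)=i}.
\end{equation*}
By the branching property, conditionally on $\tau_k=s$ the subtree $\tilde\Xi_k$ is independent of everything prior to $s$ (and of the other mutation marks), and is distributed as the clonal splitting tree $\Xi_c$ with lifespan measure $(1-p)\Lambda$ issued from a single ancestor.

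\textbf{Step 3 (Campbell's formula and conclusion).} Combining Steps 1 and 2 via Campbell's formula for the marked point process of mutations gives
\begin{equation*}
\EE{M_t^{i,\dif a}} = \EE{N(\dif s)}\Big|_{s=t-a}\cdot \p(\Xi_c(a)=i) = p\,W'(t-a)\,\p(\Xi_c(a)=i)\,\dif a.
\end{equation*}
Applying \eqref{loi_Xt_2} to the splitting tree $\Xi_c$, whose birth rate is $b(1-p)$ and scale function is $W_c$, yields
\begin{equation*}
\p(\Xi_c(a)=i)=\left(1-\frac{1}{W_c(a)}\right)^{i-1}\frac{W_c'(a)}{b(1-p)W_c^2(a)},
\end{equation*}
and substituting this into the previous display produces exactly \eqref{expect_spectrum}.

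The only non-mechanical step is Step 2--3, which relies on the independence between a clonal subtree started at a mutation event and the rest of the tree; this is standard for splitting trees (each $(\lambda_\mathbf{i},\xi_\mathbf{i})$ is i.i.d., and mutant markings are independent thinnings) and is the essential input that lets us factor the expectation into the mean mutation rate at time $t-a$ times the one-dimensional law of $\Xi_c(a)$.
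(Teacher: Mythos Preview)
Your proof is correct and follows essentially the same approach as the paper: factor the expectation as (mean number of mutation events at time $t-a$) $\times$ (probability a clonal subtree has size $i$ at age $a$), using \eqref{esp_Xt} and \eqref{loi_Xt_2}. The only cosmetic difference is that the paper conditions on $\Xi(t-a)$ and applies Poisson thinning to each of the $\Xi(t-a)$ alive individuals, whereas you aggregate these into the single Cox process $N$ with stochastic intensity $bp\,\Xi(s)$ and invoke Campbell's formula; the resulting computation $\EE{M_t^{i,\dif a}}=\EE{\Xi(t-a)}\,bp\,\p(\Xi_c(a)=i)\,\dif a$ is identical.
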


\begin{proof}
  Conditional on $\Xi(t-a)$, thanks to the branching property and classical properties of Poisson point processes, $M_t^{i,\dif a}$ is the sum of $\Xi(t-a)$ i.i.d. random variables distributed as the number of atoms in the interval $[t-a,t-a+\dif a]$ of a Poisson point process with parameter $bp\p(\Xi_c(a)=i)$. Hence,
  $$\EE{M_t^{i,\dif a}}=\Esp[\Xi(t-a)]\ bp\ \p(\Xi_c(a)=i)\dif a.$$
  According to \eqref{loi_Xt_2} and \eqref{esp_Xt}, $\Esp[\Xi(t-a)]=W'(t-a)/b$ and $$\p(\Xi_c(a)=i)=\left(1-\frac{1}{W_c(a)}\right)^{i-1}\frac{W_c'(a)}{b(1-p)W_c(a)^2}$$
  and we obtain the desired result.
  \end{proof}

As a consequence of this proposition, we deduce the expected value of the frequency spectrum. In particular, when $\Xi$ is supercritical, we see that $M_t^{i,a}$ grows as $e^{\eta t}$ when $t\to\infty$, that is, with the same growth rate as $\Xi(t)$.
\begin{cor}\label{linkinpark}
For $a\leq t$ and $i\geq1$,
{\setlength\arraycolsep{2pt}
\begin{eqnarray}
\EE{M_t^{i,a}}&=&\frac{p}{b(1-p)}\int_0^a W'(t-x)\left(1-\frac{1}{W_c(x)}\right)^{i-1}\frac{W_c'(x)}{W_c^2(x)}\dif x\nonumber\\
&&\qquad\qquad\qquad\qquad+\frac{1}{b(1-p)}\left(1-\frac{1}{W_c(t)}\right)^{i-1}\frac{W_c'(t)}{W_c^2(t)}\1{t=a}\label{jwt}.
\end{eqnarray}}
Moreover, if $m>1$, for $a\geq0$, as $t\rightarrow\infty$,
\begin{equation}\label{oasis}
e^{-\eta t}\EE{M_t^{i,a}}\longrightarrow\frac{\eta}{b}\frac{p}{1-p}\frac{J^{i,a}}{\pse}
\end{equation}
where
$$J^{i,a}:=\int_0^a e^{-\eta u}\left(1-\frac{1}{W_c(u)}\right)^{i-1}\frac{W_c'(u)}{W_c^2(u)}\dif u.$$
\end{cor}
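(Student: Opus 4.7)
The plan is to derive the formula \eqref{jwt} directly from Proposition~\ref{wxcv} by integrating over the age variable, treating the endpoint $a=t$ separately, and then to obtain \eqref{oasis} by combining this formula with the supercritical asymptotics of $W'$ from Lemma~\ref{W_et_W'}(i).

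First I would view $M_t^{i,a}$, for $0<a\le t$, as counting the alleles whose ages lie in $(0,a]$. By Fubini together with Proposition~\ref{wxcv} this gives
$$\int_0^a \EE{M_t^{i,\dif u}}=\frac{p}{b(1-p)}\int_0^a W'(t-x)\left(1-\frac{1}{W_c(x)}\right)^{i-1}\frac{W_c'(x)}{W_c^2(x)}\,\dif x,$$
which is the integral appearing in \eqref{jwt}. The remaining subtlety is the ancestral allele: its age at time $t$ equals $t$ exactly, so it is not captured by any density on $(0,t)$. When $a<t$ it contributes nothing to $M_t^{i,a}$; when $a=t$ it contributes $\1{\Xi_c(t)=i}$, because its carriers are precisely the clonal descendants of the ancestor. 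Taking expectations and applying \eqref{loi_Xt_2} to the splitting tree $\Xi_c$ (so with $b$ replaced by $b(1-p)$ and $W$ by $W_c$) produces exactly the boundary term $\1{t=a}$ displayed in \eqref{jwt}.

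For the asymptotic statement, the indicator $\1{t=a}$ disappears once $t>a$, so only the integral term survives. I would then use Lemma~\ref{W_et_W'}(i) to write
$$e^{-\eta t}W'(t-x)=\bigl(e^{-\eta(t-x)}W'(t-x)\bigr)\,e^{-\eta x},$$
and observe that, as $t\to\infty$ with $a$ fixed, $t-x\to\infty$ uniformly for $x\in[0,a]$, so that $e^{-\eta(t-x)}W'(t-x)\to \eta/\psi'(\eta)$ uniformly in $x$. Multiplying by the bounded factor in front and passing to the limit under the integral then yields the factor $\eta/\psi'(\eta)$ times $J^{i,a}$, which is exactly \eqref{oasis} after collecting the prefactor $p/[b(1-p)]$.

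I do not expect a substantial obstacle. The one step requiring a little care is the interchange of limit and integral in the asymptotic part, but the uniformity of the convergence $e^{-\eta(t-x)}W'(t-x)\to \eta/\psi'(\eta)$ on the bounded interval $[0,a]$ makes this routine (alternatively, one may bound $e^{-\eta u}W'(u)$ by a constant for $u$ large and invoke dominated convergence). A secondary bookkeeping point is the appearance of the indicator $\1{t=a}$, which is handled by separating the ancestral allele—whose age equals $t$ exactly—from the alleles created by mutation events on the interval $(t-a,t)$.
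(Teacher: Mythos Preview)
Your proposal is correct and follows essentially the same route as the paper: integrate Proposition~\ref{wxcv} over $(0,a)$ and add the ancestral contribution $\p(\Xi_c(t)=i)$ when $a=t$ to obtain \eqref{jwt}, then apply Lemma~\ref{W_et_W'}(i) for the asymptotics. The only cosmetic difference is that you justify the limit via uniform convergence on the bounded interval $[0,a]$, whereas the paper invokes dominated convergence using the bounds $\bigl(1-\tfrac{1}{W_c(x)}\bigr)^{i-1}\tfrac{W_c'(x)}{W_c^2(x)}\le b(1-p)$ and $e^{-\eta t}W'(t-x)\le C$; you in fact mention this alternative yourself.
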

%The first assertion of this corollary still holds when $m\leq1$, that is, when $\Xi$ is subcritical or critical. 
In \cite{Pakes1989}, Pakes obtained a similar result: in Lemma 3.1.2, he computed the expected frequency spectrum for a Markov branching process where children can independently be mutants of their mothers with probability $p$.
In \cite[Chap 3]{TheseMathieu}, the expected frequency spectrum formula \eqref{jwt} is used to obtain the orders of magnitude of the sizes (resp. of the ages) of the largest families (resp. of the oldest families) at time $t$ when $t\to\infty$ (see also \cite{Champ_Lamb_Rich}).

\begin{proof}[Proof of Corollary \ref{linkinpark}]We obtain \eqref{jwt} by integrating \eqref{expect_spectrum} on $(0,a)$. The second term of the r.h.s. corresponds to $\p(\Xi_c(t)=i)$, that is, the probability that the progenitor has $i$ alive clonal descendants at time $t$.

  We get the convergence result \eqref{oasis} by using the dominated convergence theorem. 
   First, 
 %  integrating \eqref{defi_W} by parts, we have
%   $$\intpos e^{-\lambda x}W'(x)\dif x=\frac{\lambda}{\psi(\lambda)}-1$$ and using a Tauberian theorem as in \textbf{Lemma in \cite{Splitting_trees_Immig}}, 
as $t\rightarrow\infty$, using the Lemma \ref{W_et_W'}, $W'(t-x)$ is equivalent to $\frac{\eta}{\pse}e^{\eta(t-x)}.$
  Then, we obtain the convergence of the integral because $\left(1-\frac{1}{W_c(x)}\right)^{i-1}\frac{W_c'(x)}{W_c^2(x)}\leq b(1-p)$ and because there exists $C>0$ such that $e^{-\eta t}W'(t-x)\leq C$ for $t$ large enough.
  \end{proof}

\subsection{Convergence results}
In this paragraph, we suppose that the process $\Xi$ is supercritical ($m>1$) and are interested in improvements of the convergence result \eqref{oasis}. We recall that $\peto$ is the law of the process $\Xi$ conditional on its survival event.

The following proposition yields the asymptotic behavior, as $t\rightarrow\infty$ and under $\peto$, of the frequency spectrum $(M_t^{i,a},i\geq1)$. We also obtain the convergence of the relative abundances $M_t^{i,a}/M_t$ of families of sizes $i$ and ages less than $a$. 
\begin{prop}
  \label{nombre_mutants}
Under $\peto$, with probability 1,
\begin{equation}
\label{dutronc1}e^{-\eta t}M_t\underset{t\rightarrow\infty}\longrightarrow\frac{p }{1-p}J \mathcal E,
\end{equation}
\begin{equation}
\label{dutronc2}e^{-\eta t}M_t^{i,a} \underset{t\rightarrow\infty}\longrightarrow\frac{p}{1-p}J^{i,a}\mathcal E,
\end{equation}
and
\begin{equation}
\label{dutronc3}\frac{M_t^{i,a}}{M_t} \underset{t\rightarrow\infty}\longrightarrow\frac{J^{i,a}}{J}
\end{equation}
where
$$J:=\intpos e^{-\eta u}\frac{W'_c(u)}{W_c(u)}\dif u$$
and where $\mathcal E$ is the exponential random variable with parameter $\pse$ defined by \eqref{epone3}.
\end{prop}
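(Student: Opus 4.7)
The natural tool is Nerman's theorem on supercritical CMJ processes counted by random characteristics \cite{Nerman_supercritical_CMJ,Jagers_BP_with_bio}, following the line of \cite{Taib1992}. The plan is to realize each of $M_t$ and $M_t^{i,a}$ as a sum of the form $\sum_\mathbf{i}\chi_\mathbf{i}(t-\alpha_\mathbf{i})$ over the tree for an appropriate characteristic $\chi$, apply the theorem to obtain the a.s.\ limits, and identify the constants via the associated Laplace-type integral.

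For each individual $\mathbf{i}$ of the splitting tree, let $\mathcal M_\mathbf{i}$ be its set of mutant daughters and, for $\mathbf{j}\in\mathcal M_\mathbf{i}$, let $\sigma_\mathbf{j}:=\alpha_\mathbf{j}-\alpha_\mathbf{i}$ and $\Xi_c^{(\mathbf{j})}$ be the clonal descent process of the subtree rooted at $\mathbf{j}$. Define two random characteristics
\begin{align*}
\chi_\mathbf{i}^{i,a}(u)&:=\sum_{\mathbf{j}\in\mathcal M_\mathbf{i}}\1{\sigma_\mathbf{j}\leq u,\ u-\sigma_\mathbf{j}\leq a,\ \Xi_c^{(\mathbf{j})}(u-\sigma_\mathbf{j})=i},\\
\chi_\mathbf{i}^{\mathrm{tot}}(u)&:=\sum_{\mathbf{j}\in\mathcal M_\mathbf{i}}\1{\sigma_\mathbf{j}\leq u,\ \Xi_c^{(\mathbf{j})}(u-\sigma_\mathbf{j})\geq 1}.
\end{align*}
Since every allele different from the ancestral one is in one-to-one correspondence with a unique mutation event, for $0<a<t$ we have the exact identity $M_t^{i,a}=\sum_\mathbf{i}\chi_\mathbf{i}^{i,a}(t-\alpha_\mathbf{i})$; for $a=t$ one must add $\1{\Xi_c(t)=i}$, and likewise $M_t=\1{\Xi_c(t)\geq 1}+\sum_\mathbf{i}\chi_\mathbf{i}^{\mathrm{tot}}(t-\alpha_\mathbf{i})$. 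These extra indicators are bounded, so they contribute nothing to the $e^{-\eta t}$ limit.

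Nerman's theorem then yields, $\peto$-almost surely,
$$e^{-\eta t}\sum_\mathbf{i}\chi_\mathbf{i}(t-\alpha_\mathbf{i})\linf{t} b\,\hat\chi(\eta)\,\mathcal E,\qquad \hat\chi(\eta):=\intpos e^{-\eta u}\EE{\chi(u)}\,\dif u,$$
where the multiplicative factor $b$ is fixed by consistency with the base case $\chi_\mathbf{i}(u)=\1{u<\lambda_\mathbf{i}}$, which must recover Proposition \ref{convergence_Amaury} (it uses $\intpos e^{-\eta u}\p(\lambda>u)\,\dif u=1/b$, itself an immediate consequence of $\psi(\eta)=0$). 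It remains to compute the two Laplace integrals. Conditional on the lifespan $\lambda$, the mutant birth process is Poisson of intensity $bp\,\dif s$ on $(0,\lambda]$, and its atoms initiate independent copies of $\Xi_c$, so
$$\EE{\chi^{i,a}(u)}=bp\int_{(u-a)^+}^u\p(\lambda>s)\,\p(\Xi_c(u-s)=i)\,\dif s,$$
and the analogous formula with $\p(\Xi_c(u-s)\geq 1)$ and no age constraint holds for $\chi^{\mathrm{tot}}$. Fubini and the identity recalled above factor the double integral, and plugging in \eqref{loi_Xt_2} (resp.\ \eqref{loi_Xt}) for $\Xi_c$ gives $\hat\chi^{i,a}(\eta)=\frac{p}{b(1-p)}J^{i,a}$ (resp.\ $\hat\chi^{\mathrm{tot}}(\eta)=\frac{p}{b(1-p)}J$). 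This establishes \eqref{dutronc1} and \eqref{dutronc2}, and \eqref{dutronc3} follows by taking the ratio, $\mathcal E$ being a.s.\ positive under $\peto$.

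The main subtlety is the verification of the hypotheses of Nerman's theorem for these progeny-dependent random characteristics, essentially an integrability bound of the type $\EE{\sup_{u\geq 0}e^{-\beta u}\chi(u)}<\infty$ for some $\beta<\eta$, together with the Malthusian condition $\intpos u e^{-\eta u}\mu(\dif u)<\infty$. In our setup $\chi^{i,a}(u)$ and $\chi^{\mathrm{tot}}(u)$ are both dominated by the total number of mutant daughters of $\mathbf{i}$, a mixed Poisson of parameter $bp\lambda$, so both conditions reduce to the moment bound $\int r e^{-\eta r}\Lambda(\dif r)<\infty$, automatic in the supercritical regime via $\psi'(\eta)=1-\int r e^{-\eta r}\Lambda(\dif r)>0$. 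Finally, the finiteness of $J$ (hence of each $J^{i,a}$) is ensured by the pointwise bound $W_c'/W_c\leq b(1-p)$ together with Lemma \ref{W_et_W'}, using the strict inequality $\eta_c<\eta$ when $\Xi_c$ is itself supercritical.
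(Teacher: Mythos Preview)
Your approach is essentially the paper's: realize $M_t^{i,a}$ and $M_t$ as CMJ processes counted by random characteristics and apply Nerman's theorem in Ta\"ib's form, then compute the Laplace integrals. The characteristics you write down and the Fubini computation of $\hat\chi(\eta)$ are correct and match the paper's; the identification of the multiplicative constant $b$ by comparison with the survival characteristic $\1{u<\lambda}$ is exactly what the paper does (it routes through $y_t$ and $\Xi_t/y_t\to\eta/b$, but the outcome is the same).

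There is, however, a genuine gap in your verification of the integrability hypothesis (the paper's condition (\ref{condition2}), i.e.\ $\EE{\sup_{u\geq0}e^{-\eta'u}\chi(u)}<\infty$ for some $\eta'<\eta$). You bound $\chi(u)$ by $\xi_m(\infty)$, the total number of mutant daughters, and then claim the condition ``reduces to $\int r e^{-\eta r}\Lambda(\dif r)<\infty$''. But $\EE{\xi_m(\infty)}=p\,m=p\int r\,\Lambda(\dif r)$, which is \emph{not} the exponentially damped moment; the model allows $m=\infty$ (and even $\Lambda(\{\infty\})>0$, in which case $\xi_m(\infty)=\infty$ with positive probability). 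The correct bound is $\chi(u)\leq N_u$, the running Poisson count of births up to age $u$; one then needs $\EE{\sup_{u\geq0}e^{-\eta'u}N_u}<\infty$, which does \emph{not} follow from a moment of $\Lambda$ but from a supermartingale/maximal inequality argument. The paper carries this out explicitly (borrowing from \cite{Champagnat2010}): for suitable $\kappa$, $(e^{-\eta'u}N_u+\kappa)^2$ is a supermartingale, whence $\p(\sup_u e^{-\eta'u}N_u\geq y)\leq C/(y+\kappa)^2$ and the expectation is finite. You should replace your last paragraph by this argument; the quantity $\int r e^{-\eta r}\Lambda(\dif r)<\infty$ is what guarantees the Malthusian condition $\beta<\infty$, not the characteristic bound.
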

Notice that \eqref{dutronc2} is consistent with \eqref{oasis} since $\p(\ext^c)=\eta/b$ and $\EE{\mathcal E}=1/\pse$. Moreover, we point out that the relative abundances $M_t^{i,a}/M_t$ converges to a deterministic limit.

Similar convergence results were obtained by Z. Ta\"{\i}b in \cite{Taib1992} who considered a similar model with more general birth point processes and mutations. However, his convergence theorems give non-explicit limits. In our particular case, we can find the distribution of the limits because the one-dimensional marginals and the asymptotic behavior of $\Xi$ are known (see above-mentioned Proposition \ref{convergence_Amaury} and formulas (\ref{loi_Xt}) and (\ref{loi_Xt_2})).

\begin{proof}[Proof of Proposition \ref{nombre_mutants}]
To prove the three convergence results, we use general properties about CMJ-processes counted by random characteristics developed by Jagers and Nerman (see  \cite{Jagers_BP_with_bio,Jagers1984a,Jagers1984}). We mainly follow ideas of Ta\"ib
and all the properties we use are recorded in the appendix of \cite{Taib1992}.
We first check that the general assumptions (C.1-4) of this appendix hold in our case: recalling that $\mu$ is the measure defined by \eqref{defi_mu}, we must have
\begin{enumerate}[(a)]
\item $\mu(\R^+)>1$ (supercritical case).
\item There is a number $\lambda_0>0$ such that $\intpos e^{-\lambda_0 u}\mu(\dif u)=1$.
\item $\beta:=\intpos ue^{-\lambda_0 u}\mu(\dif u)<\infty$.
\item $\mu$ is not supported by any lattice $k\Z$ for $k>0$.
\end{enumerate}
First, by the definition of $\mu$, (a) is satisfied since we have supposed that $m>1$ and point (d) is straightforward.
Moreover, the largest root $\eta$ of $\psi$ satisfies (b) seeing that
 \begin{equation}\label{malthusian}\intpos e^{-\eta u}\mu(\dif u)=\intpos e^{-\eta u}\Lambda((u,+\infty])\dif u=\int_{(0,\infty]}\Lambda(\dif r)\frac{1-e^{-\eta r}}{\eta}=\frac{\eta-\psi(\eta)}{\eta}=1.
 \end{equation}
Finally, let us check that (c) holds. Since $\Lambda$ is a finite measure with mass $b$, we have
$$\beta=\intpos u e^{-\eta u}\Lambda((u,+\infty])\dif u\leq b\intpos ue^{-\eta u}\dif u<\infty.$$
% {\setlength\arraycolsep{2pt}
% \begin{eqnarray}
% \beta&=&\intpos u e^{-\eta u}\dif u\int_{(u,\infty)}\Lambda(\dif r)=\int_{(0,\infty]}\Lambda(\dif r)\int_0^r u e^{-\eta u}\dif u\nonumber\\
% %
% &=&\int_{(0,\infty]}\Lambda(\dif r)\left(\frac{-re^{-\eta r}}{\eta}+\frac{1-e^{-\eta r}}{\eta^2}\right)=\frac{\psi'(\eta)-1}{\eta}+\frac{\eta-\psi(\eta)}{\eta^2}=\frac{\psi'(\eta)}{\eta}<\infty.\nonumber
% \end{eqnarray}}

We are now able to prove \eqref{dutronc2}. We follow the proof of Theorem 3.3 in \cite{Taib1992} using a random characteristic. 
We denote by $\tau_1<\tau_2<\cdots$ the successive birth times of the children of the ancestor, by $\rho_1,\rho_2,\dots$ the independent sequence of i.i.d. Bernoulli random variables with parameter $1-p$ saying if a child is a clone or a mutant and by $\Xi_c^1,\Xi_c^2,\dots$ the i.i.d. clonal population processes of the mutant children. Then, for $u\geq a$, we set the characteristic
\begin{equation*}
\chi(u):=\sum_{j\geq1}(1-\rho_j)\1{u-a\leq\tau_j<u}\1{\Xi_c^j(u-\tau_j)=i},
\end{equation*}
that is, $\chi(u)$ is the number of mutant children of the ancestor born between $u-a$ and $u$ and whose alleles are carried by $i$ individuals at $u$.
For $u\leq a$, we set $\chi(u)=0$.

In the same way, for any individual $k$ and $u>a$, denote by $\chi_k(u)$ the number of mutant children of $k$, born between $u-a$ and $u$ units of time after the birth of $k$ and whose alleles are carried by $i$ individuals $u$ units of time after the birth of $k$.
Hence, $(M_t^{i,a},t\geq0)$ can be counted by the characteristic $\chi$, meaning that for $t\geq0$,
\begin{equation}\label{charac}
M_t^{i,a}=\sum_{k=1}^{y_t}\chi_k(t-\alpha_k)+\1{t\leq a}\1{\Xi_c(t)=i}\end{equation}
where $\alpha_k$ denotes the birth time of $k$ and $y_t$ is the number of individuals born before $t$.
The second term in the r.h.s. of (\ref{charac}) pertains to the ancestor which is not counted by the characteristic. However, this term vanishes when $t>a$ and so disappears at the limit $t\rightarrow\infty$.

According to Theorem 5 in the appendix of \cite{Taib1992}, if there exists $\eta'<\eta$ such that
\begin{equation}\label{condition1}
\intpos e^{-\eta't}\mu(\dif t)<\infty
\end{equation}
and such that
\begin{equation}
\label{condition2}
\Esp\left[\sup_{u\geq0}e^{-\eta'u}\chi(u)\right]<\infty,
\end{equation}
then, under $\peto$,
$$\lim_{t\rightarrow\infty}\frac{M_t^{i,a}}{y_t}=\intpos \eta e^{-\eta u}\EE{\chi(u)}\dif u\as$$
However, conditioning by the point process $\xi_m$ which is independent from the $\Xi_c^j$'s, we have
{\setlength\arraycolsep{2pt}
\begin{eqnarray}
\intpos \eta e^{-\eta u}\EE{\chi(u)}\dif u&=&\intpos \eta e^{-\eta u}\dif u\Esp\left[\sum_{j\geq1}(1-\rho_j)\1{u-a\leq\tau_j<u}\1{\Xi_c^j(u-\tau_j)=i}\right]\nonumber\\
&=&\intpos \eta e^{-\eta u} \dif u\Esp\left[\int_{u-a}^u\p(\Xi_c(u-t)=i)\xi_m(\dif t)\right]\nonumber\\
&=&\intpos \eta e^{-\eta u} \dif u\int_{u-a}^u\p(\Xi_c(u-t)=i)\mu_m(\dif t)\nonumber\\
%
%&=& \intpos \mu_m(\dif t)\int_t^{t+a}\eta e^{-\eta u}\p(\Xi_c(u-t)=i)\dif u\nonumber\\
%
&=&\intpos \mu_m(\dif t) e^{-\eta t} \int_0^a\eta e^{-\eta u}\p(\Xi_c(u)=i)\dif u\nonumber
\end{eqnarray}}
using Fubini-Tonelli's theorem and a change of variables. Since $\mu_m=p\mu$ and $\eta$ satisfies (\ref{malthusian}), $\intpos e^{-\eta t}\mu_m(\dif t)=p$. Then, thanks to (\ref{loi_Xt_2}),
\begin{equation}\label{epone1}
\lim_{t\rightarrow\infty}\frac{M_t^{i,a}}{y_t}=\frac{p\eta}{b(1-p)}\int_0^a e^{-\eta u}\left(1-\frac{1}{W_c(u)}\right)^{i-1}\frac{W_c'(u)}{W_c^2(u)}\dif u\as
\end{equation}

We still have to find $\eta'<\eta$ such that conditions (\ref{condition1}) and (\ref{condition2}) are satisfied.
First, as in (\ref{malthusian}), for $\eta'\in(0,\eta)$, $\intpos e^{-\eta' u}\mu(\dif u)=1-\psi(\eta')/\eta'<\infty$.
Second, the characteristic $\chi$ is stochastically dominated by a Poisson process with parameter $b$, say $(N_t,t\geq0)$. Then, to prove
(\ref{condition2}), it is sufficient to show that $\Esp\left[\sup_{t\geq0}e^{-\eta't}N_t\right]$ is finite.
At the end of the proof of Proposition 5.1 in \cite[p.1028]{Champagnat2010}, the authors show that for $\kappa$ large enough, the process $A:=\left(\left(e^{-\eta't}N_t+\kappa\right)^2,t\geq0\right)$ is a supermartingale, which implies that for $y\geq0$,
$$\p\left(\sup_{t\geq0}e^{-\eta't}N_t\geq y\right)=\p\left(\sup_{t\geq0} A_t\geq (y+\kappa)^2\right)\leq\frac{\Esp[A_0]}{(y+\kappa)^2}=\frac{\kappa}{(y+\kappa)^2},$$
where the inequality is due to the maximal inequality \cite[p. 58]{Revuz1999}.
Then, $$\Esp\left[\sup_{t\geq0}e^{-\eta't}N_t\right]=\intpos \dif y\p\left(\sup_{t\geq0}e^{-\eta't}N_t\geq y\right)<\infty.$$
Furthermore, using again \cite[Thm 5]{Taib1992} with the characteristic $\chi'(u)=\1{0\leq u\leq\lambda}$, on $\peto$
\begin{equation}\label{epone2}\frac{\Xi_t}{y_t}=\frac{\sum_{k=1}^{y_t}\chi'_k(t-\alpha_k)}{y_t}\underset{t\rightarrow\infty}\longrightarrow\intpos \eta e^{-\eta x}\EE{\chi'(x)}\dif x=\intpos\eta e^{-\eta x}\dif x\int_{(x,\infty)}\frac{\Lambda(\dif r)}{b}=\frac{\eta}{b}
\end{equation}
because $\psi(\eta)=0$. Notice that in that case, condition (\ref{condition2}) is easily satisfied because $\chi'(u)\leq1$ for $u\geq0$.
Finally, using together (\ref{epone3}), (\ref{epone1}) and (\ref{epone2}) we get the convergence result \eqref{dutronc2}.

We now prove \eqref{dutronc1}. According to \cite[Thm 3.3]{Taib1992}, on $\peto$,
 $$\frac{M_t}{y_t}\underset{t\rightarrow\infty}\longrightarrow\intpos e^{-\eta x}\mu_m(\dif x)\left(1-\eta\intpos e^{-\eta u}\p(\Xi_c(u)=0)\right)\dif u=p\eta\intpos e^{-\eta u}\frac{W'_c(u)}{b(1-p)W_c(u)}\dif u.$$
By using the last display, \eqref{epone3} and \eqref{epone2}, on $\peto$, we have
$$e^{-\eta t}M_t\linf{t}\mathcal{E}\frac{p}{1-p}\intpos e^{-\eta u}\frac{W'_c(u)}{W_c(u)}\dif u.$$
%
% We then get \eqref{dutronc1} via an integration by parts since $W_c(0)=1$ according to Lemma 4.1 in \cite{Splitting_trees_Immig}. %\cite[Lem 4.1]{Splitting_trees_Immig}
%
Finally, \eqref{dutronc3} is straightforward from \eqref{dutronc1} and \eqref{dutronc2}.
\end{proof}

The following result deals with the asymptotic behavior of $M_t^{i,t}$, the number of alleles carried by $i$ individuals at time $t$.
\begin{prop}\label{allblack}
  Under $\peto$, with probability 1 as $t\rightarrow\infty$,
  $$\frac{M_t^{i,t}}{M_t}\longrightarrow%\frac{J^{i,\infty}}{J}=
  \frac{1}{i}\frac{\intpos e^{-\eta u}\left(1-\frac{1}{W_c(u)}\right)^{i}\dif u}{\intpos e^{-\eta u}\ln(W_c(u))\dif u}.$$
\end{prop}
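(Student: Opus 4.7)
The plan is to imitate the proof of Proposition \ref{nombre_mutants}, but with a random characteristic that places no upper bound on the age. Using the same notation $\tau_j,\rho_j,\Xi_c^j$ as in that proof, I would set
$$\chi(u):=\sum_{j\geq1}(1-\rho_j)\1{\tau_j\leq u}\1{\Xi_c^j(u-\tau_j)=i},$$
so that
$$M_t^{i,t}=\1{\Xi_c(t)=i}+\sum_{k=1}^{y_t}\chi_k(t-\alpha_k).$$
The conditions (\ref{condition1}) and (\ref{condition2}) still hold, because (\ref{condition1}) does not depend on the characteristic, and this $\chi$ is again stochastically dominated by a Poisson process of parameter $b$, so the supermartingale argument of the proof of \eqref{dutronc2} is reused verbatim.

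Applying Theorem~5 of the appendix of \cite{Taib1992} and repeating the Fubini--Tonelli computation used for \eqref{dutronc2} (with integration range $(0,\infty)$ in place of $(0,a)$) yields, a.s.\ on $\ext^c$,
$$\frac{M_t^{i,t}}{y_t}\linf{t}\frac{p\eta}{b(1-p)}\intpos e^{-\eta u}\left(1-\frac{1}{W_c(u)}\right)^{i-1}\frac{W_c'(u)}{W_c(u)^2}\dif u.$$
Combined with the limit $M_t/y_t\to\frac{p\eta}{b(1-p)}\intpos e^{-\eta u}W_c'(u)/W_c(u)\,\dif u$ obtained in the proof of \eqref{dutronc1}, the ratio $M_t^{i,t}/M_t$ converges a.s.\ on $\ext^c$ to the quotient of these two integrals.

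It then remains to rewrite this quotient in the announced form by integration by parts, using the identities
$$\left(1-\frac{1}{W_c(u)}\right)^{i-1}\frac{W_c'(u)}{W_c(u)^2}=\frac{1}{i}\frac{\dif}{\dif u}\!\left[\left(1-\frac{1}{W_c(u)}\right)^{\!i}\right],\qquad\frac{W_c'(u)}{W_c(u)}=\frac{\dif}{\dif u}\ln W_c(u).$$
Both boundary terms vanish: at $u=0$ one has $W_c(0)=1$ (an Abelian theorem applied to $\intpos W_c(x)e^{-\lambda x}\dif x=1/\psi_c(\lambda)\sim 1/\lambda$ as $\lambda\to\infty$), and at $u=\infty$ one has $e^{-\eta u}\ln W_c(u)\to 0$, since by Lemma \ref{W_et_W'} either $W_c$ is bounded (clonal-subcritical case $(1-p)m\leq 1$) or $W_c(u)\sim e^{\eta_c u}/\psi_c'(\eta_c)$ with $\eta_c<\eta$. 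This produces exactly the factor $1/i$ and the two integrals appearing in the statement. The genuinely new content of the argument is thus the boundary-term verification; the CMJ machinery is a direct transcription of what was done for Proposition \ref{nombre_mutants}, and the main (minor) obstacle is simply making sure that $W_c(0)=1$ and that $\ln W_c$ is absorbed by $e^{-\eta u}$.
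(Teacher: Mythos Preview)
Your proposal is correct and follows essentially the same route as the paper: define the age-unbounded characteristic $\overline\chi$, reuse the CMJ machinery from Proposition~\ref{nombre_mutants} to obtain $M_t^{i,t}/M_t\to J^{i,\infty}/J$, and then integrate by parts. The only differences are cosmetic: the paper cites \cite[Lemma 4.1]{Splitting_trees_Immig} for $W_c(0)=1$ rather than deriving it by an Abelian argument, and it does not spell out the vanishing of the boundary term at $u=\infty$, which you handle explicitly via Lemma~\ref{W_et_W'}.
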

Notice that the limit can be considered as a weighted Fisher log-series distribution, which is commonly used to describe species abundances \cite{Fisher1943}. More precisely, this limit can be written 
$$\frac{\frac{1}{i}\int_0^1(1-\omega)^i\dif \nu(\omega)}{\int_0^1\log(1/\omega) \dif\nu(\omega)}$$ where $\dif \nu(\omega)=e^{-\eta u(\omega)}\dif u(\omega)$ and $u(\omega)=W^{-1}(1/\omega)$.

\begin{proof}
Since $M_t^{i,t}$ can be counted by the random characteristic
$$\overline\chi(u):=\sum_{j\geq1}(1-\rho_j)\1{\tau_j<u}\1{\Xi_c^j(u-\tau_j)=i},$$
similar computations as in the proof of Proposition \ref{nombre_mutants} lead to the convergence  of $M_t^{i,t}/M_t$ to $J^{i,\infty}/J$ as $t\to\infty$.
We then get the result by integrating by parts $J$ and $J^{i,\infty}$ by noticing that $W_c(0)=1$ according to \cite[Lemma 4.1]{Splitting_trees_Immig}.

\end{proof}

\begin{rem}\label{jtmelo}
  In most cases, it is not possible to simplify the expressions of $J$ and $J^{i,a}$ appearing in the limits in Propositions \ref{nombre_mutants} and \ref{allblack} since the scale functions $W$ and $W_c$ are generally unknown.
  
  However, when the life lengths are exponentially distributed with parameter $d<b$ (in that case, $\Xi$ is simply a Markovian linear birth and death process with birth rate $b$ and death rate $d$), we know from \cite[p393]{Amaury_contour_splitting_trees} that 
  \begin{equation}\label{Wcexpo}
  W_c(x)=\left\{
  \begin{array}{cl}
  \frac{d-(1-p)be^{((1-p)b-d)x}}{d-(1-p)b} &\textrm{ if } d\neq(1-p)b\\
  1+dx &\textrm{ if }d=(1-p)b
  \end{array}\right..
    \end{equation}
%    and integrals are still uncomputable.
In that case, as it was recently done in \cite{Kimmel2013}, thanks to simple changes of variables, the integrals $J$ and $J^{i,\infty}$ can be expressed as Gauss hypergeometric functions or as confluent hypergeometric functions \cite{MR0167642} whose parameters only depend on $b$, $d$, $p$ and $i$. 
Then, $J$ and $J^{i,\infty}$ can be numerically computed. 
% More precisely, the Malthusian parameters associated with $\Xi$ and $\Xi_c$ are respectively $\eta=b-d$ and $\eta_c=b(1-p)-d$. We denote their ratio by $r:=\eta/\eta_c$.
% We then have $$J_i=\left\{
% \begin{array}{ll}
% \frac{\Gamma(r)i!}{\Gamma(r+i+1)\eta_c}F\left(i,r,i+1+r;\frac{d}{b(1-p)}\right)&\textrm{ if } b(1-p)>d\\
% %
% \frac{\Gamma(\eta/\eta_c)i!}{\Gamma(\eta/\eta_c+i+1)\eta_c}F\left(i,\frac{\eta}{\eta_c},i+1+\frac{\eta}{\eta_c};\frac{d}{b(1-p)}\right)&\textrm{ if } b(1-p)=d\\
% %
% -\left(\frac{b(1-p)}{d}\right)^i\frac{\Gamma(|r|)i!}{\Gamma(|r|+i+1)\eta_c}F\left(i,|r|,i+1+|r|;\frac{b(1-p)}{d}\right)&\textrm{ if } b(1-p)<d 
% \end{array}
% \right.$$

Finally, when $\Xi$ is a pure birth process, that is, when $\Lambda(\cdot)/b=\delta_\infty$, 
there are simple expressions of these integrals. Indeed, in that case, we have
$\psi(\lambda)=(\lambda-b)\1{\lambda>0}$, $\eta=b$, $W_c(x)=e^{(1-p)bx}$ from (\ref{Wcexpo}) and obviously, $\p(\ext)=0$ and $\peto=\p$.
Moreover, easy computations lead to simple expressions of $J$ and $J^{i,a}$ and we obtain convergence results such as
   $$\frac{M_t^{i,t}}{M_t}\linf{t} 
%\frac{1}{i(1-p)}\int_0^1 \left(1-y^{1-p}\right)^i\dif y
\frac{1}{i(1-p)}\sum_{k=0}^i(-1)^k\frac{1}{1+(1-p)k}\as$$    
\end{rem}

\section{Number of undergone mutations}\label{nombre_de_mutations_subies}
Let us say that the allele of the progenitor is of \emph{type 0}.
Then, recursively, for $k\geq1$, we say that an allele is of \emph{type $k$} if it is carried by a mutant child of an individual carrying an allele of type $k-1$. Equivalently, they are alleles that have been affected by $k$ more mutations than the ancestral allele. 
For $t\geq0$ and $i\geq0$, we denote by $L_i(t)$ the number of alleles of type $i$ at time $t$ and by $K_i(t)$ the number of individuals that carry such alleles at time $t$.
% In epidemiology, the alleles may model several strains of a disease/virus. Then, $L_i$ is the number of strains of a virus that have undergone $i$ more mutations than the original strain; $K_i$ is the number of individuals carrying such strains of the virus.

In this section, we obtain similar results as those of Section \ref{section_spectre_frequence}, that is, we compute the expected values $\EE{K_i(t)}$ and $\EE{L_i(t)}$ for fixed $i$ and $t$. We then study the asymptotic behavior of $K_i(t)$ as $t\to\infty$ when the clonal process is supercritical. 

The main argument we use to get these results is to consider $(K_i(t),i\geq0,t\geq0)$ as a multitype CMJ-process. More precisely, regardless of its type, every individual has a lifespan distributed as $\Lambda(\cdot)/b$ and an individual with type $i$ gives birth to individuals of the $i$-th type at rate $b(1-p)$ and to individuals of type $i+1$ at rate $bp$. Moreover, the process $(K_i,i\geq1)$ belongs to the class of \emph{reducible} multitype processes since an individual of type $i$ cannot give birth to an individual of type $j$ with $j<i$. Then, if at some time, there is no more individuals of type $i$, no such individual will reappear in the population. 

Even though the reducible branching processes are less studied than the irreducible processes (which enable the use of Perron-Frobenius theory of positive matrices),
they are useful in the model we consider. Indeed, if we are interested in the study of the process $K^i$ for a given $i$, since the types cannot decrease, we can transform a problem with an infinite and countable number of types to a problem with a finite number of possible types by studying the multitype process $(K_j,j=1,\dots, i)$.

The reader interested in multitype CMJ-processes can refer to the third and fourth chapters of Mode's book \cite{Mode1971}. 

\subsection{Expected number of mutations}
For $f,g$ two continuous functions on $[0,+\infty)$, we denote by $f\star g$ the standard convolution product
$$f\star g(t)=\int_0^tf(t-x)g(x)\dif x,\quad t\geq0.$$
We use the following notation for the consecutive convolutions of a function. For a continuous function $f$, let $f^{\star(1)}:=f$ and for $i\geq2$, $f^{\star(i)}:=f\star f^{\star(i-1)}$.
In the following result, we give simple expressions of the mean values of $K_i(t)$ and $L_i(t)$ that are entirely determined by the functions $W_c$ and $W'_c$.  

\begin{prop}\label{nombre_de_types}
 We fix $t>0$. We then have
 \begin{equation}\label{m3d}\EE{K_i(t)}=\frac{1}{b(1-p)}\left(\frac{p}{1-p}\right)^i(W_c')^{\star(i+1)}(t),\quad i\geq0
   \end{equation}
 \begin{equation}\label{m3d3}\EE{L_0(t)}=\frac{1}{b(1-p)}\frac{W'_c(t)}{W_c(t)}
 \end{equation}
 \begin{equation}\label{m3d2}\EE{L_i(t)}=\frac{1}{b(1-p)}\left(\frac{p}{1-p}\right)^i(W_c')^{\star(i)}\star\frac{W'_c}{W_c}(t),\quad i\geq1.
 \end{equation}
 \end{prop}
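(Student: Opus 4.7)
The plan is to proceed by induction on $i$, using the branching property (to decompose the type-$i$ subpopulation as a sum of independent clonal trees founded by mutations) and Campbell's formula (to compute the expectation of this sum as an integral). The key identity is a recursion of convolution type, from which both formulas follow by iteration.

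Step 1 (base cases). Type-$0$ individuals are exactly the clonal descendants of the progenitor, so $K_0(t)=\Xi_c(t)$ and $L_0(t)=\1{\Xi_c(t)>0}$. Applying (\ref{esp_Xt}) and (\ref{loi_Xt}) to the clonal splitting tree (with birth rate $b(1-p)$ and scale function $W_c$) immediately yields
$$\EE{K_0(t)}=\frac{W'_c(t)}{b(1-p)}\quad\text{and}\quad \EE{L_0(t)}=\frac{W'_c(t)}{b(1-p)W_c(t)},$$
which is (\ref{m3d}) for $i=0$ and (\ref{m3d3}).

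Step 2 (recursion). For $i\geq1$, call a \emph{type-$i$ founder} a mutant child of a type-$(i-1)$ individual; every type-$i$ individual alive at $t$ is a clonal descendant of exactly one such founder. By the thinning of the Poissonian birth process into clonal and mutant parts, conditionally on the type-$(i-1)$ population, the point process of founder birth times $(\sigma_k)$ is Cox with intensity $bp\,K_{i-1}(s)\,\dif s$; and by the branching property, attached to each $\sigma_k$ there is an independent clonal splitting tree $\Xi_c^{(k)}$ (with law $\Xi_c$), independent of $(\sigma_k)$. Hence
$$K_i(t)=\sum_{\sigma_k<t}\Xi_c^{(k)}(t-\sigma_k),\qquad L_i(t)=\sum_{\sigma_k<t}\1{\Xi_c^{(k)}(t-\sigma_k)>0}.$$
Taking expectations (Campbell/Fubini) and using the base-case expressions for $\EE{\Xi_c(u)}$ and $\p(\Xi_c(u)>0)$ gives
$$\EE{K_i(t)}=bp\int_0^t\EE{K_{i-1}(s)}\,\frac{W'_c(t-s)}{b(1-p)}\,\dif s,\qquad \EE{L_i(t)}=bp\int_0^t\EE{K_{i-1}(s)}\,\frac{W'_c(t-s)}{b(1-p)W_c(t-s)}\,\dif s.$$

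Step 3 (induction). Substituting the inductive hypothesis $\EE{K_{i-1}(s)}=\frac{1}{b(1-p)}\bigl(\frac{p}{1-p}\bigr)^{i-1}(W'_c)^{\star(i)}(s)$ into the first recursion and collecting the constants ($bp/(b(1-p))^2=\frac{1}{b(1-p)}\cdot\frac{p}{1-p}$) yields (\ref{m3d}) for $i$; the same substitution in the second recursion yields (\ref{m3d2}). This closes the induction.

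The main obstacle is purely bookkeeping: justifying the decomposition in Step 2, that is, that after thinning one really gets a Cox point process of founders with intensity $bp\,K_{i-1}(s)\,\dif s$ and that the subsequent clonal trees $\Xi_c^{(k)}$ are i.i.d.\ with law $\Xi_c$ and independent of the founders. This is standard for reducible multitype CMJ-processes (as recalled in \cite{Mode1971}) but must be stated explicitly since the whole proof reduces to that identification plus an elementary convolution computation; once it is granted, the induction is immediate.
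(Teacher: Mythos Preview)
Your proof is correct and follows essentially the same approach as the paper. Both argue by induction on $i$, obtain the same convolution recursion $\EE{K_i(t)}=bp\int_0^t\EE{K_{i-1}(s)}\,\EE{\Xi_c(t-s)}\,\dif s$ (and the analogous one for $L_i$), and close the induction identically; the only cosmetic difference is that the paper phrases the derivation via infinitesimal age intervals $[t-a,t-a+\dif a]$ and the branching property, whereas you package the same computation as a Cox point process of founders plus Campbell's formula.
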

 
   In \cite{Mode1971}, C. Mode considers general multitype branching processes. In paragraph 4.4, he gives an equation satisfied by the Laplace transform of $\EE{K_i(\cdot)}$. In most cases, this equation cannot be solved but it enables to obtain the asymptotic behavior of $\EE{K_i(t)}$ as $t\to\infty$. In our particular case, one can solve the equation of Mode to obtain the Laplace transform of $\EE{K_i(\cdot)}$ and to compute $\EE{K_i(t)}$ thanks to an inverse Laplace transform.
   Nevertheless, in the following proof, we prefer to use a more direct approach which avoids long computations.

\begin{proof}[Proof of Proposition \ref{nombre_de_types}]
 We prove \eqref{m3d} by induction on $i\geq0$. For $i=0$, according to \eqref{esp_Xt}, 
 $\EE{K_0(t)}=\EE{\Xi_c(t)}=\frac{W_c'(t)}{b(1-p)}$.
 We now suppose that the result is true at rank $i-1$.
 Denote by $K_i^{\dif a}(t)$ the number of individuals of type $i$ that are alive at time $t$ and whose alleles appeared in the population between times $t-a$ and $t-a+\dif a$. 
 Then, in the same manner as in the proof of Proposition \ref{wxcv}, using the branching property, conditional on $K_{i-1}(t-a)=n$, $K_i^{\dif a}(t)$ is the sum of $n$ independent random variables. These random variables are distributed as the number at time $a$, of all the clonal descendants of mutant individuals born in a time interval $[0,\dif a]$.
 Then,
 $$\EE{K_i^{\dif a}(t)}=\EE{K_{i-1}(t-a)}\EE{\Xi_c(a)}bp\dif a.$$
 Finally, using the induction hypothesis and integrating over $a\in(0,t)$, we have
 $$\EE{K_i^{a}(t)}=\frac{1}{b(1-p)}\left(\frac{p}{1-p}\right)^{i-1}\int_0^t\left(W_c'\right)^{\star(i)}(t-a)\frac{W'_c(a)}{b(1-p)}bp\dif a,$$
 which is what we wanted to prove.
 
 Since $L_0(t)=\1{\Xi_c(t)>0}$, \eqref{m3d3} is straightforward from \eqref{loi_Xt}.
 We prove \eqref{m3d2} with similar techniques as we proved \eqref{m3d}. The mean number of alleles of type $i$ with age $a$ at time $t$ is
 $$\EE{L^{\dif a}_i(t)}=\EE{K_{i-1}(t-a)}bp\p(\xi_c(a)>0)\dif a.$$
 We then get the result thanks to \eqref{loi_Xt}, \eqref{m3d} and by integrating the last display on $[0,t]$.
\end{proof}

 From Proposition \ref{nombre_de_types}, we obtain the asymptotic behaviors of $\EE{K_i(t)}$ and $\EE{L_i(t)}$ as $t\to\infty$. Different regimes appear depending on the class of criticality of the clonal process $\Xi_c$.
 
 \begin{cor}\label{asympt_nombre_moyen_types}
 We suppose that one of the following hypotheses holds
 \begin{itemize}
 \item $(1-p)m>1$,
 \item $(1-p)m=1$, $\sigma^2<\infty$ and $\lim_{t\to\infty}t^2\int_t^\infty(x-t)\Lambda(\dif x)=0$,%\eqref{cond_W_critique} is satisfied,
 \item $(1-p)m<1$ and $\psi_c$ has a negative root $\tilde \eta_c$ satisfying $\intpos re^{-\tilde \eta_cr}\Lambda(\dif r)<\infty$.%\eqref{condJagers3}.
 \end{itemize}
  Then, for $i\geq0$, we have 
$$\EE{K_i(t)}\underset{t\to\infty}\sim \frac{1}{b(1-p)i!}\left(\frac{p}{1-p}\right)^iC_p^{i+1}\cdot t^ie^{\eta_p t}$$
 where 
 $$\eta_p:=\left\{
 \begin{array}{cc}
\eta_c&\textrm{ if } (1-p)m>1\\                    
0&\textrm{ if } (1-p)m=1\\
\tilde\eta_c&\textrm{ if } (1-p)m<1
\end{array}\right.
%\quad
%
\textrm{ and }
C_p:=\left\{
 \begin{array}{cc}
\displaystyle\frac{\eta_c}{\psi'_c(\eta_c)}&\textrm{ if } (1-p)m>1\\              
\displaystyle\frac{2}{(1-p)\sigma^2}&\textrm{ if } (1-p)m=1\\
\displaystyle\frac{\tilde\eta_c}{\psi'_c(\tilde\eta_c)}&\textrm{ if } (1-p)m<1
\end{array}\right..
$$
\end{cor}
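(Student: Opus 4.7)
The plan is to combine the formula from Proposition \ref{nombre_de_types} with an asymptotic for the $(i+1)$-fold convolution $(W'_c)^{\star(i+1)}$. Since
$$\EE{K_i(t)} = \frac{1}{b(1-p)}\left(\frac{p}{1-p}\right)^i (W'_c)^{\star(i+1)}(t),$$
the task reduces to showing $(W'_c)^{\star(i+1)}(t) \sim C_p^{i+1}\, t^i e^{\eta_p t}/i!$ as $t \to \infty$. As a first step I apply Lemma \ref{W_et_W'} to the clonal splitting tree (so that $b,\psi,\eta,W$ are replaced by $b(1-p),\psi_c,\eta_c,W_c$); the three hypotheses of the corollary are tailored precisely so that the corresponding case of Lemma \ref{W_et_W'} applies, and in each regime its conclusion can be written uniformly as $W'_c(t) \sim C_p\, e^{\eta_p t}$, with $(C_p,\eta_p)$ as in the statement (and $\eta_p = 0$ in the critical case).

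Next, I perform an exponential tilt. Set $h(t) := e^{-\eta_p t} W'_c(t)$. Then $h$ is continuous on $[0,\infty)$ (since $W_c$ is $C^1$ under hypothesis \eqref{noatom}) and $h(t) \to C_p$, so $h$ is bounded. Pulling the exponentials out of each convolution factor yields the identity
$$(W'_c)^{\star(i+1)}(t) = e^{\eta_p t}\, h^{\star(i+1)}(t),$$
so the remaining problem is to describe the asymptotics of $h^{\star(i+1)}(t)$.

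The main ingredient is the following elementary claim, which I would prove by induction on $k \geq 1$: \emph{if $h \geq 0$ is bounded and $h(t) \to C > 0$, then $h^{\star k}(t) \sim C^k\, t^{k-1}/(k-1)!$}. The base case $k=1$ is the hypothesis on $h$. For the step, given $\varepsilon > 0$, choose $T$ so that $|h(s) - C| \leq \varepsilon$ and $|h^{\star k}(s) - C^k s^{k-1}/(k-1)!| \leq \varepsilon\, s^{k-1}/(k-1)!$ for $s \geq T$, and split
$$h^{\star(k+1)}(t) = \int_0^T + \int_T^{t-T} + \int_{t-T}^t h(t-s)\, h^{\star k}(s)\,\dif s.$$
The two boundary pieces are $O(t^{k-1})$ (use boundedness of $h$, continuity of $h^{\star k}$ on $[0,T]$, and the polynomial upper bound on $h^{\star k}(s)$ for $s \geq T$), while the middle piece is sandwiched between $(C \pm \varepsilon)(C^k \pm \varepsilon)\, t^k/k! \cdot (1 + o(1))$; letting $\varepsilon \to 0$ gives $h^{\star(k+1)}(t) \sim C^{k+1} t^k/k!$. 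Applying this with $k = i+1$ and combining with the tilt yields the required asymptotic, which substituted back in the formula for $\EE{K_i(t)}$ completes the proof.

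The main obstacle is not conceptual but rather bookkeeping: making the exponential tilt handle all three criticality regimes at once, and controlling the two boundary integrals relative to the dominant $t^k$ middle contribution. An alternative, arguably more technical, route is via Laplace transforms: the transform $[\lambda/\psi_c(\lambda) - 1]^{i+1}$ of $(W'_c)^{\star(i+1)}$ has its rightmost singularity at $\lambda = \eta_p$, a pole of order $i+1$ with leading coefficient $C_p^{i+1}$, and a Tauberian argument then yields the same asymptotic.
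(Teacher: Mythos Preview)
Your proof is correct and follows essentially the same route as the paper: reduce to the asymptotics of $(W_c')^{\star(i+1)}$ via Proposition~\ref{nombre_de_types}, feed in $W_c'(t)\sim C_p e^{\eta_p t}$ from Lemma~\ref{W_et_W'}, and prove a convolution lemma by induction. The only cosmetic differences are that the paper builds the exponential tilt directly into the statement of its Lemma~\ref{convo1} (hypothesis $e^{at}f(t)\to l$) rather than tilting first, and proves the inductive step via the change of variables $x\mapsto t x$ on $[0,1]$ with dominated convergence rather than your three-piece splitting.
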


 \begin{proof}%[Proof of Corollary \ref{asympt_nombre_moyen_types}]
 This result is a consequence of the following lemma thanks to Proposition \ref{nombre_de_types} and Lemma \ref{W_et_W'} which gives the asymptotic behaviors of $W'_c(t)$ as $t\to\infty$ in all cases. %$e^{-\eta_ct}W'_c(t)\to\eta_c/\psi_c'(\eta_c)$ as $t\to\infty$. 
\end{proof}
 
\begin{lem}\label{convo1}
 Let $f$ be a non-negative continuous function on $[0,+\infty)$ such that for some $a\in\R$ and $l\geq0$,
 \begin{equation}
 \label{train2}e^{at}f(t)\linf{t}l.
  \end{equation}
 Then, for $i\geq1$
 \begin{equation*}\label{train}
 \frac{e^{at}}{t^{i-1}}f^{\star(i)}(t)\linf{t}\frac{l^i}{(i-1)!}.
   \end{equation*}
\end{lem}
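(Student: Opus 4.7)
My plan is to proceed by induction on $i$, the base case $i=1$ being precisely the hypothesis \eqref{train2}. For the inductive step, I would start from
$$f^{\star(i+1)}(t) = \int_0^t f^{\star(i)}(t-x)f(x)\,\dif x$$
and rescale via the substitution $x=tu$ to rewrite
$$\frac{e^{at}}{t^i}f^{\star(i+1)}(t) = \int_0^1 (1-u)^{i-1}\,\frac{e^{at(1-u)}f^{\star(i)}(t(1-u))}{(t(1-u))^{i-1}}\,\bigl(e^{atu}f(tu)\bigr)\,\dif u.$$

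For each fixed $u\in(0,1)$, as $t\to\infty$ the middle factor tends to $l^i/(i-1)!$ by the induction hypothesis (applied at $s=t(1-u)\to\infty$) and the last factor tends to $l$ by \eqref{train2} (applied at $tu\to\infty$), so the integrand converges pointwise to $l^{i+1}(1-u)^{i-1}/(i-1)!$. Integrating against $\dif u$ on $(0,1)$ would then yield the announced limit $l^{i+1}/i!$, matching the statement at rank $i+1$.

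The main obstacle is to justify the passage to the limit under the integral via dominated convergence. Since $t\mapsto e^{at}f(t)$ is continuous on $[0,\infty)$ and has a finite limit at $+\infty$, it is bounded by some $M>0$, so $f(t)\leq Me^{-at}$ for every $t\geq 0$. A short induction on $i$, whose inductive step reduces to the elementary computation $\int_0^t(t-x)^{i-1}\,\dif x = t^i/i$, then establishes the uniform bound
$$f^{\star(i)}(t)\leq \frac{M^i\,t^{i-1}}{(i-1)!}\,e^{-at},\quad t\geq 0.$$
Substituting this estimate into the rescaled integrand above controls it by the $t$-independent integrable function $u\mapsto M^{i+1}(1-u)^{i-1}/(i-1)!$ on $(0,1)$, and dominated convergence then concludes the proof.
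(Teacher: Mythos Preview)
Your proof is correct and follows essentially the same route as the paper: induction on $i$, the substitution $x=tu$ to rewrite the convolution as an integral over $(0,1)$, and dominated convergence using a uniform bound on $e^{at}t^{-(i-1)}f^{\star(i)}(t)$. The only cosmetic difference is that the paper folds the boundedness statement $S_i:=\sup_{t>0}e^{at}t^{-(i-1)}f^{\star(i)}(t)<\infty$ into the same induction as the convergence (obtaining $S_{i+1}\le S_1S_i/i$), whereas you run a separate short induction to get the explicit envelope $f^{\star(i)}(t)\le M^it^{i-1}e^{-at}/(i-1)!$; the two arguments are equivalent.
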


\begin{proof}
 We use induction on $i\geq1$ to prove the two properties
 \begin{equation}\label{train3}
 S_i:=\sup_{t\in(0,+\infty)}\frac{e^{at}}{t^{i-1}}f^{\star(i)}(t)<\infty\quad\textrm{ and }\quad\frac{e^{at}}{t^{i-1}}f^{\star(i)}(t)\linf{t}\frac{l^i}{(i-1)!}.
   \end{equation}
 This is obvious when $i=1$ since $f$ is continuous and since we assume \eqref{train2}.
 We now suppose that \eqref{train3} holds for an integer $i\geq1$.
 By a change of variables, we have
 $$f^{\star(i+1)}(t)=\int_0^tf(t-x)f^{\star(i)}(x)\dif x=t\int_0^1f(t(1-x))f^{\star(i)}(tx)\dif x.$$
Hence,
\begin{equation}\label{xmen}t^{-i}e^{at}f^{\star(i+1)}(t)=\int_0^1\left(e^{at(1-x)}f(t(1-x))\right)\left(\frac{e^{atx}}{(tx)^{i-1}}f^{\star(i)}(tx)\right)x^{i-1}\dif x.
 \end{equation}
Thus, we first have
$S_{i+1}\leq S_1S_i\int_0^1x^{i-1}\dif x<\infty$. Moreover, as $t\to\infty$, the r.h.s. of \eqref{xmen} converges to $\displaystyle l\frac{l^{i}}{(i-1)!}\int_0^1x^{i-1}\dif x=\frac{l^{i+1}}{i!}$ by using \eqref{train2}, the recurrence hypothesis and the dominated convergence theorem (the integrand is upper-bounded by $S_1S_ix^{i-1}$ which is integrable on $(0,1)$).
\end{proof}

We next determine the asymptotic behavior of $\EE{L_i(t)}$ as $t\to\infty$.
 \begin{cor}\label{asympt_nombre_moyen_types_2}
\begin{enumerate}[{\normalfont (i)}]
Let $i$ be a positive integer.
 \item If $(1-p)m>1$, we have
 $$\EE{L_i(t)}\underset{t\to\infty}\sim \frac{J_c}{b(1-p)(i-1)!}\left(\frac{p}{1-p}\right)^i\left(\frac{\eta_c}{\psi'_c(\eta_c)}\right)^{i}\cdot t^{i-1}e^{\eta_ct}$$
 where $J_c:=\intpos e^{-\eta_c u}\frac{W'_c(u)}{W_c(u)}\dif u.$
 \item If $(1-p)m=1$, if $\sigma^2<\infty$ and if \eqref{cond_W_critique} holds,
 $$\EE{L_i(t)}\underset{t\to\infty}\sim C_i\ {t^{i}}\ln t$$
 where $C_i=\frac{1}{b(1-p)(i-1)!}\left(\frac{2p}{\sigma^2(1-p)^2}\right)^i$.
 \item If $(1-p)m<1$ and if there is $\tilde \eta_c$ satisfying \eqref{condJagers3},
$$\EE{L_i(t)}\underset{t\to\infty}\sim
\frac{1-(1-p)m}{b(1-p)i!}\left(\frac{p}{1-p}\right)^i\left(\frac{\tilde \eta_c}{\psi'(\tilde \eta_c)}\right)^{i+1}\cdot t^{i}e^{\tilde \eta_ct}.$$ 
  \end{enumerate}
   \end{cor}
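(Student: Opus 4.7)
The plan is to use the exact formula
\begin{equation*}
\EE{L_i(t)}=\frac{1}{b(1-p)}\left(\frac{p}{1-p}\right)^i(W_c')^{\star(i)}\star\frac{W'_c}{W_c}(t)
\end{equation*}
from Proposition \ref{nombre_de_types}, reducing the problem to determining the large-$t$ behavior of the convolution $(W_c')^{\star(i)}\star(W'_c/W_c)$ in each of the three regimes. The asymptotics of $W_c$ and $W'_c$ are provided by Lemma \ref{W_et_W'} applied to the clonal process (replace $b$, $\psi$, $\eta$, $W$ by $b(1-p)$, $\psi_c$, $\eta_c$, $W_c$).

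A common first step is to apply Lemma \ref{convo1} to $(W'_c)^{\star(i)}$: in the supercritical case this yields $(W'_c)^{\star(i)}(t)\sim \frac{1}{(i-1)!}(\eta_c/\psi'_c(\eta_c))^{i}\, t^{i-1}e^{\eta_c t}$; in the critical case $(W'_c)^{\star(i)}(t)\sim \frac{c^{i}}{(i-1)!}\, t^{i-1}$ with $c=2/((1-p)\sigma^2)$; and in the subcritical case $(W'_c)^{\star(i)}(t)\sim \frac{1}{(i-1)!}(\tilde\eta_c/\psi'_c(\tilde\eta_c))^{i}\, t^{i-1}e^{\tilde\eta_c t}$.

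The convolution with $W'_c/W_c$ is then treated case by case. For case (i), $e^{-\eta_c x}W'_c(x)/W_c(x)$ decays exponentially with finite integral $J_c$; after the substitution $u=t-x$ and normalization by $e^{-\eta_c t}/t^{i-1}$, the rescaled integrand is bounded by an integrable function and converges pointwise (using the previous step together with $(1-x/t)^{i-1}\to 1$), so dominated convergence gives the stated asymptotic with prefactor $J_c(\eta_c/\psi'_c(\eta_c))^{i}/(i-1)!$. Case (iii) proceeds identically in spirit: both factors of the convolution decay exponentially at the same rate $\tilde\eta_c<0$, and using $W_c(x)\to 1/(1-(1-p)m)$, one has $e^{-\tilde\eta_c x}W'_c(x)/W_c(x)\to(1-(1-p)m)\tilde\eta_c/\psi'_c(\tilde\eta_c)$. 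A scaling $x=ts$ followed by dominated convergence (in the spirit of the proof of Lemma \ref{convo1}) then produces the expected additional power of $t$ and the prefactor, giving the claimed asymptotic of order $t^{i}e^{\tilde\eta_c t}$.

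The main obstacle is case (ii) (critical). Here $W'_c/W_c\sim 1/x$ at infinity, so that $\int_0^T W'_c(x)/W_c(x)\,\dif x=\ln W_c(T)\sim \ln T$ diverges, and the rescaled integrand used in case (i) admits no integrable dominant. The remedy is to split the convolution at $x=t-A$ for a large fixed constant $A$: on $[0,t-A]$, the asymptotics of $(W'_c)^{\star(i)}(t-x)$ and of $W'_c(x)/W_c(x)\sim 1/x$ reduce the contribution, up to lower-order terms, to $\frac{c^{i}}{(i-1)!}\int_A^{t-A}(t-y)^{i-1}/y\,\dif y$, whose binomial expansion makes the $k=0$ term $t^{i-1}\ln(t/A)$ dominant (the terms with $k\geq 1$ produce $O(t^{i-1})$ contributions); on $[t-A,t]$, the boundedness of $W'_c$ and the finiteness of $\int_0^A W'_c(w)/W_c(w)\,\dif w$ (since $W_c$ is continuous and positive on $[0,A]$) yield only a lower-order contribution. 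Combining the two regions and tracking constants via $c=2/((1-p)\sigma^2)$ produces the logarithmic factor in the claimed asymptotic and identifies the constant $C_i$.
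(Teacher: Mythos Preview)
Your approach is essentially the same as the paper's: both start from the convolution formula of Proposition \ref{nombre_de_types}, invoke Lemma \ref{convo1} for the asymptotics of $(W'_c)^{\star(i)}$, treat case (i) via the dominated-convergence argument that the paper packages as Lemma \ref{convo2}, case (iii) via what the paper calls ``a slight modification of Lemma \ref{convo1}'', and case (ii) by splitting the convolution integral at $t-A$. Your binomial expansion of $(t-y)^{i-1}$ in case (ii) makes explicit a step the paper leaves terse, but the overall strategy is identical.
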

\begin{proof}
We begin by proving the subcritical case. According to Lemma \ref{W_et_W'}(iii), $W'_c(t)/W_c(t)$ behaves as
$(1-(1-p)m)\frac{\tilde\eta_c}{\psi'(\tilde \eta_c)}e^{\tilde \eta_c t}$ as $t\to\infty$. Then point (iii) stems from a slight modification of Lemma \ref{convo1}.

Concerning the critical case, according to Lemmas \ref{W_et_W'} and \ref{convo1}, 
for any $\eps>0$, there is $A>0$ such that, if $t-x\geq A$, then
$$\left|\frac{(W_c')^{\star(i)}(t-x)}{t^i}-\underbrace{\frac{1}{(i-1)!}\left(\frac{2}{(1-p)\sigma^2}\right)^i}_{:=A_i}\right|\leq \eps.$$
Then, for $\eps>0$ and $t\geq A$, we have
$$t^{-i}(W_c')^{\star(i)}\star\frac{W'_c}{W_c}(t)=I_1(t)+I_2(t)$$
where 
$$I_1(t):=\int_0^{t-A}\!\!t^{-i}(W_c')^{\star(i)}(t-x)\frac{W'_c(x)}{W_c(x)}\dif x\in\left[(A_i-\eps)\int_0^{t-A}\!\frac{W'_c(x)}{W_c(x)}\dif x,(A_i+\eps)\int_0^{t-A}\!\frac{W'_c(x)}{W_c(x)}\dif x\right]$$
and
$$I_2(t):=\int_0^{A}t^{-i}(W_c')^{\star(i)}(x)\frac{W'_c(t-x)}{W_c(t-x)}\dif x\leq t^{-i}b(1-p)\int_0^{A}(W_c')^{\star(i)}(x)\dif x$$
where we have used that $\frac{W'_c(x)}{W_c(x)}=b(1-p)\p(\Xi_c(t)>0)$. 
Hence, as $t\to\infty$, $I_2(t)$ vanishes and $I_1(t)$ is equivalent to $A_i\ln(t)$ since
we know from  Lemma \ref{W_et_W'}(ii) that $W'_c(x)/W_c(x)\sim 1/x$
as $x\to\infty$. 
Thus,
$$\EE{L^i(t)}=\frac{1}{b(1-p)}\left(\frac{p}{1-p}\right)^i(W_c')^{\star(i)}\star\frac{W'_c}{W_c}(t)\underset{t\to\infty}\sim\frac{A_i}{b(1-p)}\left(\frac{p}{1-p}\right)^i\ t^i\ln t.$$

Finally, we get the asymptotic behavior in the supercritical case by a direct application of the following Lemma \ref{convo2}. Indeed, according to the proof of Lemma \ref{asympt_nombre_moyen_types_2}, $(W_c')^{\star(i)}(t)$ grows as $t^ie^{\eta_ct}$ as $t\to\infty$ and from Lemma \ref{W_et_W'} (i), we have $\frac{W'_c(t)}{W_c(t)}\to\eta_c$ as $t\to\infty$.
\end{proof}
 
 \begin{lem}\label{convo2}
 Let $f,g$ be two continuous and positive functions such that $f(t)$ converges as $t\to\infty$ and such that for some $a>0$, 
 \begin{equation}\label{convo}
 \lim_{t\to\infty}\frac{e^{-at}}{t^i}g(t)=l'.
   \end{equation}
 Then, as $t\to\infty$, $$f\star g(t)\sim l'e^{at}t^i\intpos e^{-ax}f(x)\dif x.$$
 \end{lem}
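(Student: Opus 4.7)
My plan is to perform the change of variables $y=t-x$ in the convolution and then reduce the statement to an application of dominated convergence. Setting $h(s):=e^{-as}g(s)/s^i$, which by hypothesis tends to $l'$ as $s\to\infty$, I would rewrite
\[
\frac{f\star g(t)}{e^{at}t^i}=\int_0^t f(y)\,e^{-ay}\left(\frac{t-y}{t}\right)^{\!i}h(t-y)\,\dif y,
\]
and observe that for each fixed $y$ the integrand converges pointwise to $f(y)\,e^{-ay}l'$ as $t\to\infty$. Since $f$ is continuous and has a finite limit at infinity it is bounded, say by $M_f$, and since $a>0$ the candidate limit $l'\intpos e^{-ay}f(y)\,\dif y$ is finite.

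To justify passing to the limit, I would fix $\delta>0$ and split $\int_0^t=\int_0^{t-\delta}+\int_{t-\delta}^{t}$. On $[0,t-\delta]$ one has $t-y\geq \delta$, and $h$ restricted to $[\delta,\infty)$ is continuous and convergent at infinity, hence bounded by some constant $C_\delta<\infty$; combined with $((t-y)/t)^i\leq 1$, this provides the uniform dominator $M_f C_\delta\,e^{-ay}$, which is integrable on $[0,\infty)$. Dominated convergence then yields
\[
\int_0^{t-\delta}f(y)\,e^{-ay}\left(\frac{t-y}{t}\right)^{\!i}h(t-y)\,\dif y\;\underset{t\to\infty}{\longrightarrow}\;l'\intpos f(y)\,e^{-ay}\,\dif y.
\]
On the remaining slice $[t-\delta,t]$, continuity of $g$ on the compact $[0,\delta]$ gives $M_g^\delta:=\sup_{[0,\delta]}g<\infty$, so that portion of $f\star g(t)/(e^{at}t^i)$ is majorised by $M_f M_g^\delta\,\delta\cdot e^{-at}t^{-i}$, which vanishes as $t\to\infty$ because $a>0$. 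Adding both contributions would give the claimed equivalent.

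The delicate point is that $h(s)=e^{-as}g(s)/s^i$ can blow up as $s\to 0^+$ when $i\geq 1$, so a direct dominated convergence argument on the whole interval $[0,t]$ fails. The splitting at $t-\delta$ precisely isolates this potentially singular behaviour into a slice of length $\delta$ whose contribution decays like $e^{-at}/t^i$ and is therefore negligible in the ratio.
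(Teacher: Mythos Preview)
Your proof is correct and follows essentially the paper's route: rewrite $e^{-at}t^{-i}\,f\star g(t)$ as an integral of $e^{-ay}f(y)$ against a kernel converging pointwise to $l'$, and invoke dominated convergence with the integrable envelope $C\,e^{-ay}$. The only difference is that the paper bounds the kernel globally via $e^{-a(t-y)}g(t-y)/t^i\leq e^{-a(t-y)}g(t-y)/(t-y)^i\leq M$, whereas you correctly note that $e^{-as}g(s)/s^i$ can blow up as $s\to 0^+$ when $i\geq 1$ and $g(0)>0$, and you isolate the slice $[t-\delta,t]$ to dispose of this contribution directly. Your argument is thus a touch more careful than the paper's on this point, while otherwise identical in spirit.
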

 \begin{proof}
  We have $$\frac{e^{-at}}{t^i}f\star g(t)=\intpos e^{-ax}f(x)h(t,x)\dif x$$
  where $h(t,x):=\frac{e^{-a(t-x)}g(t-x)}{t^i}\1{0\leq x\leq t}$. Moreover, $h(t,x)\linf{t}l'\1{x\geq0}$ and according to \eqref{convo} $$h(t,x)\leq \frac{e^{-a(t-x)}g(t-x)}{(t-x)^i}\1{x\leq t}\leq M$$ where $M$ is a constant that does not depend on $t$ and $x$. Finally, since $f(t)$ converges as $t\to\infty$, $\intpos f(x)e^{-ax}\dif x$ is finite and the result stems from the dominated convergence theorem. 
  \end{proof}

  \subsection{Asymptotic behavior of \texorpdfstring{$K_i(t)$}{Ki(t)} when \texorpdfstring{$\Xi_c$}{Xic} is supercritical}
  In Corollary \ref{asympt_nombre_moyen_types}, we proved that $\EE{K_i(t)}$ grows as $t^ie^{\eta_ct}$ as $t\to\infty$ when the clonal process is supercritical.
  We now improve that convergence result by studying the asymptotic behavior of $K_i(t)$.
  Notice that in the subcritical and critical cases, the process $K_i$ goes extincted a.s.
  
 \begin{thm}\label{CV_types}
 We suppose that $(1-p)m>1$ and that $\sigma^2:=\intpos\Lambda(\dif z)z^2<\infty$.
Then, for $i\geq0$, almost surely and in quadratic mean, we have 
  $$\frac{e^{-\eta_ct}}{t^{i}}K_i(t)\underset{t\to\infty}{\longrightarrow} \kappa_i$$
  where 
  $\p(\kappa_i=0)=1-\frac{\eta_c}{b(1-p)}$ and conditional on being non zero, $\kappa_i$ is exponential with mean 
  $$\frac{1}{i!}\left(\frac{p}{1-p}\right)^i\frac{\eta_c^i}{\psi'_c(\eta_c)^{i+1}}.$$    
 \end{thm}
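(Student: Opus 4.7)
The plan is induction on $i\geq 0$, exploiting the fact that, conditionally on the trajectory of the type-$(i-1)$ population, the birth times $(T_j)_{j\geq1}$ of the type-$i$ alleles form a Cox process of intensity $bp\,K_{i-1}(s)\,\dif s$, each such birth founding an independent clonal splitting tree distributed as $\Xi_c$. The base case $i=0$ is exactly Proposition~\ref{convergence_Amaury} applied to $\Xi_c$ (with Malthusian parameter $\eta_c$ and extinction probability $1-\eta_c/(b(1-p))$): it yields the a.s.\ convergence of $e^{-\eta_c t}K_0(t)$ to the announced $\kappa_0$, and the finite-variance hypothesis $\sigma^2<\infty$ provides the bound $\EE{\Xi_c(u)^2}=O(e^{2\eta_c u})$, which upgrades the convergence to $L^2$.

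For the inductive step, write
$$K_i(t) = \sum_{j\geq1}\tilde\Xi_c^{(j)}(t-T_j)\,\1{T_j\leq t},$$
where $(\tilde\Xi_c^{(j)})$ are i.i.d.\ copies of $\Xi_c$ independent of $(T_j)$, and let $\mathcal F_{i-1}$ be the $\sigma$-algebra generated by the type-$(i-1)$ history. Conditioning gives
$$\EE{K_i(t)\mid\mathcal F_{i-1}} = \frac{p}{1-p}\int_0^t K_{i-1}(s)\,W'_c(t-s)\,\dif s.$$
The change of variable $s=tu$, the inductive hypothesis $K_{i-1}(ts)/((ts)^{i-1}e^{\eta_c ts})\to\kappa_{i-1}$ (a.s.\ and in $L^2$), and the asymptotic $W'_c(u)\sim (\eta_c/\psi'_c(\eta_c))e^{\eta_c u}$ from Lemma~\ref{W_et_W'}(i) combine, by dominated convergence, to give
$$\frac{e^{-\eta_c t}}{t^i}\EE{K_i(t)\mid\mathcal F_{i-1}} \linf{t} \frac{p\eta_c}{i(1-p)\psi'_c(\eta_c)}\,\kappa_{i-1} =: \kappa_i.$$
Iterating the recursion from $\kappa_0$ yields $\kappa_i = \tfrac{1}{i!}\bigl(p\eta_c/((1-p)\psi'_c(\eta_c))\bigr)^i\,\kappa_0$, which has precisely the claimed mixed atom/exponential law.

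The conditional fluctuations around this mean are handled by the Poisson variance formula
$$\Var\bigl(K_i(t)\mid\mathcal F_{i-1}\bigr) = bp\int_0^t K_{i-1}(s)\,\EE{\Xi_c(t-s)^2}\,\dif s.$$
Combining $\EE{\Xi_c(u)^2}=O(e^{2\eta_c u})$ with $\EE{K_{i-1}(s)}=O(s^{i-1}e^{\eta_c s})$ from Corollary~\ref{asympt_nombre_moyen_types}, this conditional variance is $O(t^{i}e^{2\eta_c t})=o(t^{2i}e^{2\eta_c t})$, so normalization by $t^{2i}e^{2\eta_c t}$ kills it, and $L^2$ convergence of $e^{-\eta_c t}t^{-i}K_i(t)$ to $\kappa_i$ follows by Chebyshev. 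The a.s.\ statement is then obtained by a Borel--Cantelli argument along the integer times $t_n=n$, together with a control on the fluctuations of $K_i$ over unit intervals coming from the (at most) exponential growth of $\Xi$ between consecutive birth events.

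\textbf{Main obstacle.} The delicate step is the second-moment bound $\EE{\Xi_c(u)^2}=O(e^{2\eta_c u})$, whose proof relies crucially on $\sigma^2<\infty$; without it the Poisson variance above cannot be controlled and the induction breaks. Once this estimate and the $L^2$ base case are in hand, the identification of the law of $\kappa_i$ is essentially automatic: the linear recursion $\kappa_i=(bp\,\EE{\kappa_0}/i)\,\kappa_{i-1}$, together with the law of $\kappa_0$ from Proposition~\ref{convergence_Amaury}, produces the exact distribution announced.
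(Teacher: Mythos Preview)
Your approach is genuinely different from the paper's and, if carried out carefully, yields a stronger conclusion. The paper proceeds in three steps: (1) existence of the a.s.\ and $L^2$ limit $\kappa_i$ is imported wholesale from Mode's multitype CMJ theorem; (2) a functional equation
\[
\varphi_i(a)=\int_0^\infty\frac{\Lambda(\dif z)}{b}\exp\Bigl\{b(1-p)\int_0^z\bigl(\varphi_i(ae^{-\eta_c u})-1\bigr)\dif u\Bigr\}
\]
is derived for the Laplace transform of $\kappa_i$ by conditioning on the first generation; (3) one checks that the announced mixed law satisfies this equation, and a separate uniqueness lemma closes the argument. In particular the paper never asserts any pathwise relation between the $\kappa_i$'s. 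Your Cox-process induction, by contrast, shows directly that $\kappa_i=\tfrac{1}{i!}\bigl(p\eta_c/((1-p)\psi'_c(\eta_c))\bigr)^i\kappa_0$ almost surely, which makes the identification of the law immediate from the $i=0$ case and bypasses the functional-equation machinery entirely. That is a real gain.

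Two points deserve tightening. First, your remark that the bound $\EE{\Xi_c(u)^2}=O(e^{2\eta_c u})$ ``relies crucially on $\sigma^2<\infty$'' is not accurate: by \eqref{loi_Xt_2} the law of $\Xi_c(u)$ is explicit (geometric on $\{\Xi_c(u)>0\}$), and one computes $\EE{\Xi_c(u)^2}=W'_c(u)(2W_c(u)-1)/(b(1-p))$, which is $O(e^{2\eta_c u})$ by Lemma~\ref{W_et_W'}(i) with no moment assumption on $\Lambda$ beyond supercriticality. So your argument may in fact not need $\sigma^2<\infty$ at all, whereas the paper uses it to verify Mode's hypotheses. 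Second, the passage from $L^2$ convergence to a.s.\ convergence is underspecified: Borel--Cantelli along integers requires a summable rate, not merely $L^2$ convergence. You do have such a rate for the fluctuation term $K_i(n)-\EE{K_i(n)\mid\mathcal F_{i-1}}$, since its (conditional) variance over $n^{2i}e^{2\eta_c n}$ is $O(n^{-2i})$; but the a.s.\ convergence of the conditional-mean term should be argued directly from the a.s.\ inductive hypothesis via dominated convergence, not via $L^2$. The interpolation between integer times also needs a genuine monotonicity or envelope argument, since $K_i$ is not monotone.
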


   \begin{proof}
  The case $i=0$ is straightforward since $K_0=\Xi_c$ and the result holds according to Proposition \ref{convergence_Amaury}. Then, from now on, we suppose that $i\geq1$.
  
 The proof is divided in three steps. In the first one, we prove that the a.s. and $L^2$ convergences hold. We then identify the law of the limit: we find conditions characterizing its Laplace transform (step 2) and then exhibit the random variable which satisfies these conditions (step 3). 
 \paragraph{Step 1: Proof of the convergence}\quad\\
  To prove the a.s. and quadratic mean convergences toward $\kappa_i$ as $t\to\infty$, we use Theorem 4.3 (iv) in \cite[p.173]{Mode1971}.
  Actually, in this book, C. Mode only proved that $t^{-1}e^{-\eta_c t}K_1(t)$ converges as $t\to\infty$ but the same result holds about $K_i$ for $i\geq2$ by using similar techniques.
   
   Let us check the hypotheses of Theorem 4.3 of \cite{Mode1971}. First, we have to show that $N$, the number of children of the ancestor, is square integrable. Conditional on having a life length $z$, $N$ is a Poisson random variable with parameter $bz$. Then,
   $$\EE{N^2}=b^{-1}\intpos\Lambda(\dif z)((bz)^2+bz)=b\sigma^2+m<\infty.$$
   Second, let us consider an individual of type $i$ whose life length is infinite. Denote by  
$\mu_{i}(t)$ her mean number of children that are born before she reaches age $t$. 
   We need to prove that $\max_{i}\mu_{i}(t)$ is zero when $t=0$ and finite when $t>0$. In our case, it is obvious since for any $i\geq0$, $\mu_{i}(t)=bt$. \\ 
   Finally, if $\zeta$ is the life length of an individual, we have to check that $\intpos\dif t\p(\zeta\geq t)^p$ is finite for $p=1$ and also for some $p>1$.
   We have
   $$\intpos\dif t\p(\zeta\geq t)^p\leq\intpos\dif t\p(\zeta\geq t)=\EE{\zeta}=\frac{m}{b}<\infty.$$

\paragraph{Step 2: Characterization of the Laplace transform of $\kappa_i$}\quad\\
For $i\geq1$, denote by $\varphi_i$ the Laplace transform of $\kappa_i$
  $$\varphi_i(a):=\EE{e^{-a\kappa_i}},\quad a>0.$$
% We first find conditions satisfied by $\varphi_i$. Thanks to the first step of the proof, we know that the convergence toward $\kappa_i$ holds in quadratic mean. Combining this with Corollary \ref{asympt_nombre_moyen_types}, it implies that
  $\EE{\kappa_i}=\frac{1}{b(1-p)}\frac{1}{i!}\left(\frac{p}{1-p}\right)^i\left(\frac{\eta_c}{\psi'_c(\eta_c)}\right)^{i+1}.$
In particular, $\kappa_i$ is almost surely finite and
\begin{equation}\label{indoc}
\varphi_i(0)=1 \textrm{ and } \varphi_i \textrm{ is differentiable at }0 \textrm{ with } \varphi_i'(0)=\frac{1}{b(1-p)}\frac{1}{i!}\left(\frac{p}{1-p}\right)^i\left(\frac{\eta_c}{\psi'_c(\eta_c)}\right)^{i+1}.
 \end{equation}
We now want to prove that for $i\geq1$ and $a>0$, we have
  \begin{equation}\label{equation_carac_kappa_i}
   \varphi_i(a)=\intpos\frac{\Lambda(\dif z)}{b}\exp\left\{b(1-p)\left(\int_0^z\varphi_i(ae^{-\eta_cu})\dif u-z\right)\right\}.
  \end{equation}
  To show that the latter holds, we look at the children of the ancestor. More precisely, conditional on having a lifespan $z$, before time $t$, the ancestor has a number of mutant (resp. clonal) children distributed as a Poisson random variable with parameter $bp(z\!\wedge\! t)$ (resp. $b(1-p)(z\!\wedge\! t)$). Moreover, these two random variables are independent. 
 
 For $z>0$, $t\geq0$ and $k,l\in\Nat$, by the branching property and classical properties about Poisson processes, conditional on the event ``during its lifespan $z$ and before age $t$, the ancestor has $k$ mutant and $l$ clonal children '', we have
  
  $$K_i(t)\overset{\textrm{(d)}}=\sum_{q=1}^k \widetilde K^{q}_i(t-U_q)+\sum_{r=1}^l \widehat K_i^r(t-V_r)$$
  where
  \begin{itemize}
   \item the processes $\widetilde K_i^1, \dots,\widetilde K_i^k$ are i.i.d. and distributed as $K_{i-1}$,
   \item the processes $\widehat K_i^1, \dots,\widehat K_i^k$ are i.i.d. and distributed as $K_{i}$,
   \item the random variables $U_1,\dots,U_p,V_1,\dots,V_q$ are i.i.d. and uniform in $[0,z\!\wedge\! t]$.
  \end{itemize}
Moreover, all these quantities are mutually independent. Hence, for $s\in[0,1]$, we have
{\setlength\arraycolsep{2pt}
\begin{eqnarray*}
\EE{s^{K_i(t)}}&=&\intpos\frac{\Lambda(\dif z)}{b}\sum_{k,l\geq0}e^{-bz\!\wedge\! t}\frac{(bp(z\!\wedge\! t))^k}{k!}\frac{(b(1-p)(z\!\wedge\! t))^l}{l!}\EE{s^{K_{i-1}(t-U_1)}}^k\EE{s^{K_{i}(t-V_1)}}^l\\
&=&\intpos\!\!\frac{\Lambda(\dif z)}{b}\exp\left\{-bz\!\wedge\! t+pb(z\!\wedge\! t)\EE{s^{K_{i-1}(t-U_1)}}+(1-p)b(z\!\wedge\! t)\EE{s^{K_{i}(t-V_1)}}\right\}.
\end{eqnarray*}}
Thus, if we denote by $G_{i,t}$ the probability generating function of $K_i(t)$, we have for $s\in[0,1]$
\begin{equation}\label{equa_generatrice}G_{i,t}(s)=\intpos\frac{\Lambda(\dif z)}{b}\exp\left\{-b(z\!\wedge\! t)+pb\int_0^{z\!\wedge\! t}G_{i-1,t-u}(s)\dif u+(1-p)b\int_0^{z\!\wedge\! t} G_{i,t-v}(s)\dif v\right\}.
 \end{equation}
According to the first step of the proof, as $t\to\infty$, $t^{-i}e^{-\eta_ct}K_i(t)$ converges a.s. and then in distribution to $\kappa_i$. Then, 
\begin{equation*}
G_{i,t}\left(e^{-at^{-i}e^{-\eta_ct} }\right)=\EE{e^{-aK_i(t)t^{-i}e^{-\eta_ct}}}\linf{t}\varphi_i(a),\nonumber\label{eq_G1}
\end{equation*}
\begin{equation}
G_{i-1,t-u}\left(e^{-at^{-i}e^{-\eta_ct} }\right)\linf{t}1,\label{eq_G2}
\end{equation}
and
\begin{equation}
G_{i,t-v}\left(e^{-at^{-i}e^{-\eta_ct} }\right)\linf{t}\varphi_i(ae^{-\eta_cv}).\label{eq_G3}
\end{equation}
We set
$$C(t,z):=-b(z\!\wedge\! t)+pb\int_0^{z\!\wedge\! t}G_{i-1,t-u}\left(e^{-at^{-i}e^{-\eta_ct}}\right)\dif u+(1-p)b
\int_0^{z\!\wedge\! t} G_{i,t-v}\left(e^{-at^{-i}e^{-\eta_ct}} \right)\dif v.$$
Then, since for all $t\geq0$ and $i\in\Nat$, $G_{i,t}$ and $G_{i-1,t}$ are less than $1$, using the dominated convergence theorem and equations \eqref{eq_G2} and \eqref{eq_G3}, we get
$$\lim_{t\to\infty}C(t,z)=-b(1-p)z+(1-p)b\int_0^z\varphi_i(ae^{-\eta_cv})\dif v$$  
Finally, if we apply \eqref{equa_generatrice} with $s=e^{-at^{-i}e^{-\eta_ct}}$ and let $t$ go to $\infty$, we obtain \eqref{equation_carac_kappa_i}. Indeed, we can again use the dominated convergence theorem since $C(t,z)\leq0$ for $t\geq0$ and $z>0$ and since $\Lambda$ is a finite measure.\\

In summary, we have proved that $\varphi_i$ satisfies \eqref{indoc} and \eqref{equation_carac_kappa_i}. In fact, according to the following Lemma \ref{unicite} whose proof is postponed to the end of the section, $\varphi_i$ is the only function that fulfills these two conditions, that is, $\kappa_i$ is characterized by \eqref{indoc} and \eqref{equation_carac_kappa_i}.
  \begin{lem}\label{unicite}
Suppose that $\Lambda$ satisfies the hypotheses of Theorem \ref{CV_types} and let $A\in(0,+\infty)$. Then, there is a unique function $f:[0,+\infty)\to[0,1]$ satisfying the following properties
 \begin{enumerate}[{\normalfont (i)}]
  \item $f$ is continuous on $[0,+\infty)$ and $f(0)=1$.
  \item $f$ is differentiable at $0$ and $f'(0)=A$.
  \item For any $a>0$, $f(a)=\intpos\frac{\Lambda(\dif z)}{b}\exp\left\{b(1-p)\int_0^z(f(ae^{-\eta_cu})-1)\dif u\right\}.$
 \end{enumerate}
 \end{lem}
 
\paragraph{Step 3: Identification of the solution}\quad\\
 We denote $P:=\frac{\eta_c}{b(1-p)}$ and for $i\geq1$, let $E_i$ be a non-negative random variable such that $\p({E_i}=0)=1-P$ and conditional on being positive, ${E_i}$ is exponential with parameter $\theta_i:=\left(\frac{1}{i!}\left(\frac{p}{1-p}\right)^i\frac{\eta_c^i}{\psi'_c(\eta_c)^{i+1}}\right)^{-1}.$
 To complete the proof of Theorem \ref{CV_types}, it remains to check that the Laplace transform of $E_i$, that is
   $\varphi_{E_i}(a)=1-P+P\frac{\theta_i}{a+\theta_i},$  satisfies the conditions \eqref{indoc} and \eqref{equation_carac_kappa_i} of the Step 2.
   
   Condition \eqref{indoc} is trivially satisfied since $\EE{E_i}=P/\theta_i$. Furthermore, we have
   {\setlength\arraycolsep{2pt}
\begin{eqnarray*}
\int_0^z\varphi_{E_i}(e^{-\eta_cu})\dif u&=&z\left(1-P\right)+P\int_0^z\frac{\theta_i e^{\eta_cu}}{a+\theta_i e^{\eta_cu}}\dif u\\
&=&z\left(1-P\right)+\frac{P}{\eta_c}\left(\ln(a+\theta_i e^{\eta_cz})-\ln(a+\theta_i)\right).
\end{eqnarray*}}
Then,
 {\setlength\arraycolsep{2pt}
\begin{eqnarray*}
\intpos\frac{\Lambda(\dif z)}{b}\exp\left\{b(1-p)\left(\int_0^z\varphi_{E_i}(ae^{-\eta_cu})\dif u-z\right)\right\}&=&\intpos\frac{\Lambda(\dif z)}{b}e^{-\eta_cz}\frac{a+\theta_i e^{\eta_cz}}{a+\theta_i}\\
&=&\frac{\theta_i}{a+\theta_i}+\frac{a}{a+\theta_i}\intpos\frac{\Lambda(\dif z)}{b}e^{-\eta_cz}.
   \end{eqnarray*}}
Since $\eta_c$ is a root of the function $\psi_c$ defined by \eqref{def_psi_c}, we have $(1-p)\intpos\Lambda(\dif z)e^{-\eta_cz}=b(1-p)-\eta_c$.
Then, $$\intpos\frac{\Lambda(\dif z)}{b}\exp\left\{b(1-p)\left(\int_0^z\varphi_{E_i}(ae^{-\eta_cu})\dif u-z\right)\right\}=\frac{\theta_i}{a+\theta_i}+\frac{a}{a+\theta_i}(1-P)=\varphi_{E_i}(a).$$
Thus, $\varphi_{E_i}$ satisfies condition \eqref{equation_carac_kappa_i}, which ends the proof of Theorem \ref{CV_types}.
\end{proof}

%\paragraph{Step 4: Characterization of $\varphi_i$}\quad\\

% We prove here that if a non-negative random variable has a positive and finite mean $\mu:=\EE{Y}$ and if its Laplace transform satisfies \eqref{equation_carac_kappa_i}, then
% $$\p(Y>0)=\frac{\eta_c}{b(1-p)},$$ and conditional on being positive, $Y$ is exponential with mean $$\mu\frac{b(1-p)}{\eta_c}.$$
% %
% If we show this, the proof of Theorem \ref{CV_types} is complete. 

 \begin{proof}[Proof of Lemma \ref{unicite}]
The proof of this result is mostly inspired from that of Theorem 7.2 in \cite{Doney1972}.
Let $\nu$ be the positive measure on $[0,+\infty)$ defined by $\nu(\dif x):=(1-p)e^{-\eta_cx}\dif x\int_x^{+\infty}\Lambda(\dif z)$. This is a probability measure since by the Fubini theorem, 
$$\intpos \nu(\dif x)=(1-p)\intpos \Lambda(\dif z)\int_0^ze^{-\eta_cx}\dif x=(1-p)\intpos \Lambda(\dif z)\frac{1-e^{-\eta_cz}}{\eta_c}=\frac{\eta_c-\psi_c(\eta_c)}{\eta_c}=1$$
where we have used that $\eta_c$ is the largest root of $\psi_c$ .

 Let $f$ and $g$ be two functions satisfying the three conditions of Lemma \ref{unicite}. For $a>0$, we define $\Delta(a):=\frac{|f(a)-g(a)|}{a}$. Since $f$ and $g$ fulfill the conditions (i) and (ii), we have 
 $$\frac{f(a)-g(a)}{a}=\frac{f(a)-1}{a}+\frac{1-g(a)}{a}\underset{a\to0}\longrightarrow f'(0)-g'(0)=0.$$
Hence, $\Delta(0^+)=0$ and since $f,g$ are continuous and bounded on $[0,+\infty)$, the function $\Delta$ also is. 
Moreover, since $f$ and $g$ satisfy (iii) and take values in $[0,1]$, using the inequality $|e^{-x}-e^{-y}|\leq |x-y|$ for $x,y\geq0$ , we have
\begin{align*}
\Delta(a)&\leq (1-p)\intpos \Lambda(\dif z)\int_0^z\Delta\left(ae^{-\eta_cu}
\right)e^{-\eta_cu}\dif u\\
&=(1-p)\intpos \dif u\Delta\left(ae^{-\eta_cu}\right)e^{-\eta_cu}\int_u^{\infty}\Lambda(\dif z)\\
&=\EE{\Delta\left(ae^{-\eta_cZ}\right)}
\end{align*}
where $Z$ is a random variable distributed as the probability measure $\nu$. Thus, iterating this argument,
\begin{equation}\label{creep}
\Delta(a)\leq \EE{\Delta\left(ae^{-\eta_c(Z_1+\cdots+Z_n)}\right)}
\end{equation}
where the random variables $Z_i$'s are i.i.d. and distributed as $\nu$. Using the definition of $\nu$, these random variables are a.s. positive. Thus, % and the fact that $\Lambda$ has a finite second moment, simple computations lead to %$0<\EE{Z}\leq (1-p)\sigma^2/2<\infty$. Hence, the strong law of large numbers ensures that 
the sum $Z_1+\cdots+Z_n$ a.s. goes to $+\infty$ as $n\to\infty$ and the r.h.s of \eqref{creep} goes to $\Delta(0^+)=0$ as $n\to\infty$ by the dominated convergence theorem (recall that $\Delta$ is bounded). Finally, $\Delta(a)=0$ for any positive $a$ and the proof is complete.
\end{proof}  

 \begin{rem}
We can also view $(L^i(t),i\geq0,t\geq0)$ as a multitype CMJ-process where the individuals of this branching process are the different alleles; roughly speaking, all the individuals sharing the same alleles are gathered into clusters. Moreover, an allele of type $i$
has a lifespan distributed as the extinction time of $\Xi_c$ and gives birth to alleles of type $i+1$ according to the jump times of $\left(Y_{\int_0^t\Xi(s)\dif s},t\geq0\right)$ where $Y$ is a homogeneous Poisson process with rate $bp$, independent of $\Xi_c$. 
 
In the supercritical case, it seems not possible to study the asymptotic behavior of $L_i(t)$ with the techniques we used to obtain that of $K_i(t)$. 
The problem is that in this multitype process, the number of ``children'' of an allele may be infinite since it survives forever with positive probability $\frac{\eta_c}{b(1-p)}$.
Then, some conditions of \cite[Thm 4.3]{Mode1971} are not fulfilled. As an example, 
the second moment property we checked in the first step of the proof of Theorem \ref{CV_types} is false here.
%the mean number of mutant alleles that are produced by an allele is infinite since each allele has a positive probability $\frac{\eta_c}{b(1-p)}$ to survive forever and then to give birth to an infinite number of children. 
 \end{rem}

\appendix
 %\section{Appendix}

 \section{Asymptotic behaviors of \texorpdfstring{$W$}{W} and \texorpdfstring{$W'$}{W'}}
 \subsection{Proof of Lemma \ref{W_et_W'}(i)}
We suppose here that $m>1$ which implies that $\eta>0$.
 To obtain the asymptotic behavior of $W$, we use a Tauberian theorem about Laplace transforms.
Indeed, we have $$\intpos e^{-\lambda x}\left(e^{-\eta x}W(t)\right)\dif x=\frac{1}{\psi(\eta+\lambda)}\underset{\lambda\to0}\sim\frac{1}{\lambda\pse}.$$
Then, since $t\longmapsto e^{-\eta t}W(t)$ is non-decreasing according to equation (4) in \cite{Chan_Kyprianou} (applied with $q=0$), a Tauberian theorem entails the desired result (see for instance \cite[p.10]{Levy_processes}).

To get the behavior of $W'$, since $t\longmapsto e^{-\eta t}W'(t)$ is not necessarily non-decreasing, we cannot use the same method. However, according to \eqref{formule_derivee_W},
$$e^{-\eta t}W'(t)=be^{-\eta t}W(t)-\int_0^te^{-\eta(t-x)}W(t-x)e^{-\eta x}\Lambda(\dif x).$$
Since $t\longmapsto e^{-\eta t}W(t)$ is non-decreasing and converges to $1/\pse$, the monotone convergence theorem implies that
$$\lim_{t\to\infty}e^{-\eta t}W'(t)=\frac{b}{\pse}-\frac{1}{\pse}\intpos e^{-\eta x}\Lambda(\dif x).$$
The proof is then complete since from \eqref{def_psi}, we have $0=\psi(\eta)=\eta-b+\intpos e^{-\eta x}\Lambda(\dif x)$.
 
\subsection{Proof of Lemma \ref{W_et_W'}(ii)}
We assume here that $m=1$ and $\sigma^2=\intpos z^2\Lambda(\dif z)<\infty$. 
First, notice that in that case $\psi'(0)=1-m=0$ and that $\psi''(0)=\sigma^2$. So 
$$\intpos e^{-\lambda x}W(x)\dif x=\frac{1}{\psi(\lambda)}\underset{\lambda\to0}\sim\frac{2}{\sigma^2\lambda^2}.$$ 
Hence, a Tauberian theorem yields the asymptotic behavior of $W(t)$ as $t\to\infty$.

As in the supercritical case, it is not possible to obtain the asymptotic behavior of $W'$ with the help of a Tauberian theorem because this function is not monotone. We use known results about the extinction time of a critical CMJ-process. 
According to the main result of \cite{Holte1974303}, recalling that the measure $\mu$ is defined by \eqref{defi_mu} and that $\zeta$ is the lifespan of the ancestor, if 
\begin{gather}
\lim_{t\to\infty}t^2(1-\mu([0,t])=0\label{cond_1}\\
\lim_{t\to\infty}t^2\p(\zeta>t)=0,\label{cond_2}
\end{gather}
then 
$$\lim_{t\to\infty}t\p(\Xi(t)>0)=\frac{2\intpos r\mu(\dif r)}{V}$$
where $V$ is the variance of the number of children of the ancestor.
Conditional on ``the ancestor has a lifespan $z$'', this number is Poisson with parameter $bz$. Then, we have $V=b\intpos r^2\Lambda(\dif r)$. Moreover, according to \eqref{defi_mu},
$$\intpos r\mu(\dif r)=\intpos r \dif r\int_r^\infty\Lambda(\dif x)=\frac{\sigma^2}{2}.$$
Thus,
\begin{equation}\label{comportementprobaextinction}\p(\Xi(t)>0)\underset{t\to\infty}\sim\frac{1}{tb}.
 \end{equation}
It remains to prove that the conditions \eqref{cond_1} and \eqref{cond_2} hold. By using \eqref{defi_mu}, \eqref{cond_1} is equivalent to the condition \eqref{cond_W_critique} that we have assumed here. Moreover, $\p(\zeta>t)=b^{-1}\Lambda((t,+\infty])$, which goes to $0$ faster than $t^{-2}$ as $t\to\infty$ since the second moment of $\Lambda$ is finite.
 
 On the other hand, according to \eqref{loi_Xt}, we know that $\p(\Xi(t)>0)=\frac{W'(t)}{bW(t)}$. Hence, $W'(t)$ converges to $2/\sigma^2$ as a consequence of \eqref{comportementprobaextinction} and of the fact that $W(t)$ behaves as $2/(\sigma^2t)$ as $t\to\infty$.

\subsection{Proof of Lemma \ref{W_et_W'}(iii)} 
As in the critical case, the asymptotic behavior of $W$ can be obtained via a Tauberian theorem because $\psi(\lambda)\sim(1-m)\lambda$ as $\lambda\to0$.

To study $W'$ in that case, we use again known results about the time of extinction of CMJ-processes.
The hypotheses \eqref{condJagers3} that we make about $\tilde \eta_c$ enables us to use Theorem 6.7.10 in \cite{Jagers_BP_with_bio}, that is,
$$\widetilde C:=\lim_{t\to\infty}e^{-\tilde \eta t}\p(\Xi(t)>0)$$ exists and is positive if and only if $$\Esp\left[\intpos e^{-\tilde \eta  t}\xi(\dif t)\log\{\xi((0,\infty))\}\right]<\infty$$
where we recall that $\xi$ is the birth point process of the ancestor.
By conditioning on the lifespan of the ancestor and by using classical properties about Poisson point processes,
we have
$$\Esp\left[\intpos e^{-\tilde \eta  t}\xi(\dif t)\log\{\xi((0,\infty))\}\right]\leq\int_{(0,\infty)}\Lambda(\dif z)\frac{1-e^{-\tilde \eta z}}{\tilde \eta }\left((1-p)+b(1-p)^2z\right)$$
which is finite thanks to \eqref{condJagers3}.
Then, we have proved that there exists $\widetilde C >0$ such that
\begin{equation}\label{afterf}\p(\Xi(t)>0)\underset{t\to\infty}\sim \widetilde C e^{\tilde \eta t}.
\end{equation}
It remains to compute $\widetilde C$ (which is unknown in most CMJ-processes). 
Since $W(t)$ converges to $(1-m)^{-1}$ as $t\to\infty$, using \eqref{loi_Xt} and \eqref{afterf},
\begin{equation}\label{afterfo}
W' (t)\underset{t\to\infty}\sim\frac{b}{1-m}\widetilde C e^{\tilde \eta t}.
\end{equation}
Moreover, by integrating \eqref{defi_W} by parts, since $W(0)=1$ according to \cite[Lem 4.1]{Splitting_trees_Immig}, we have for $\lambda>\tilde \eta$
$$\intpos W' (u)e^{-\lambda u}\dif u=\frac{\lambda}{\psi(\lambda)}-1$$
with the convention $0/\psi(0)=1/(1-m)<\infty$.
Then,
$$\intpos \left(W' (u)e^{-\tilde \eta  u}\right)e^{-\lambda u}\dif u=\frac{\lambda+\tilde \eta }{\psi(\lambda+\tilde \eta )}\underset{\lambda\to0}\sim\frac{\tilde \eta }{\lambda\psi'(\tilde \eta )}.$$
On the other hand, using \cite[Thm. 1.7.6]{Bingham1989} with \eqref{afterfo},
we get $$\intpos \left(W' (u)e^{-\tilde \eta  u}\right)e^{-\lambda u}\dif u\underset{\lambda\to0}\sim\frac{b\widetilde C }{1-m}\frac{1}{\lambda},$$
which implies that $\widetilde C=\frac{\tilde\eta}{\psi(\tilde\eta)}$.

\section*{Acknowledgments}
This work was supported by project MANEGE ANR-09-BLAN-0215
(French national research agency). I want to thank Amaury Lambert for his very helpful remarks and comments and Guillaume Achaz who raised a question about the number of mutations undergone 
by an allele during a talk given at Coll\`ege de France. My thanks also to two anonymous referees for their careful check of this paper and for helpful remarks.

%\bibliographystyle{abbrv}
%\bibliography{/home/mathieu/Dropbox/Recherche/ma_biblio}

 \end{document}